\newtheorem{thm}{Theorem}
\newtheorem{lem}[thm]{Lemma}
\newdefinition{rmk}{Remark}
\newproof{pf}{Proof}
\newdefinition{proposition}{Proposition}
\newdefinition{assumption}{Assumption}
\def\div{\mathop{\mathrm{div}}\nolimits}
\def\!{\mathop{\mathrm{!}}}
\def\R{\mathbf{ R}}
\def\X{\tilde {X}}
\def\P{\tilde{P}}
\def\Q{\tilde{Q}}
\def\Z{\tilde{Z}}
\def\Qq{\mathcal{Q}}
\def\E{\mathbb{E}}
\def\D{\mathcal{D}}
\def\M{\mathcal{M}}
\journal{***}
\begin{document}

\begin{frontmatter}



\title{Long time behaviour and particle approximation of a generalized Vlasov dynamic}


\author{Manh Hong Duong}

\address{Mathematics Institute,\\
University of Warwick, \\
Coventry CV4 7AL, UK. \\
Email: m.h.duong@warwick.ac.uk}

\begin{abstract}
In this paper, we are interested in a generalised Vlasov equation, which describes the evolution of the probability density of a particle evolving according to a generalised Vlasov dynamic. The achievement of the paper is twofold. Firstly, we 
obtain a quantitative rate of convergence to the stationary solution in the Wasserstein metric. Secondly, we provide a many-particle approximation for the equation and show  that the approximate system satisfies the propagation of chaos property.
\end{abstract}

\begin{keyword}


Long time behaviour, particle approximation, propagation of chaos, generalised Vlasov dynamic, Wasserstein metric.
\end{keyword}
\end{frontmatter}


\section{Introduction}
\subsection{The main equation}
This paper is concerned with the long time behaviour and  particle approximation of solutions of the following equation
\begin{equation}
\label{eq: GLE}
\partial_t\rho_t=-p\cdot\nabla_q\rho_t+\left[\beta q+ A(q)+B\ast \rho_t(q)-\lambda z\right]\cdot \nabla_p\rho_t+\div_z[(\lambda p+\alpha\, z)\rho_t]+\Delta_z\rho_t.
\end{equation}
The spatial domain is $\R^{3d}$ with coordinates $(q,p,z)$, with $q,p$ and $z$ each in $\R^d$. Subscripts as in $\nabla_q$ and $\Delta_z$ are used to indicate that the differential operators act only on those variables. The unknown is a time-dependent probability measure $\rho\colon[0,T]\to \mathcal{P}(\R^{3d})$; $A$ and $B$ are given $\R^d$-to-$\R^d$ maps; $\alpha,\beta$ and $\lambda$ are given positive constants. Finally, the convolution $B\ast \rho_t(q)$ is defined by 
\begin{equation}
\label{eq: convolution}
B\ast \rho_t(q)=\int_{\R^{3d}} B(q-q')\rho_t(q',p,z) dq'dpdz.
\end{equation}
It is well-known that Eq. \eqref{eq: GLE} is the forward Kolmogorov equation of the following stochastic differential equation (SDE)
\begin{equation}
\label{eq: SDE}
\begin{cases}
dQ_t=P_t\,dt,
\\dP_t=- \beta\, Q_t\,dt-A(Q_t)\,dt-B\ast\rho_t(Q_t)\,dt+\lambda\,Z_t\,dt,
\\dZ_t=-\lambda\,P_t\,dt-\alpha\, Z_t\,dt+\sqrt{2}\,dW_t,
\end{cases}
\end{equation}
where $(W_t)_{t\geq 0}$ is the $d$-dimensional standard Wiener process.

The law of the $\R^{3d}$-valued process $(Q_t,P_t,Z_t)_{t\geq 0}$ evolving according to the SDE \eqref{eq: SDE} is a solution of Eq. \eqref{eq: GLE} at time $t$. Systems of the form as in \eqref{eq: GLE} and \eqref{eq: SDE} have been studied by many authors. When $B=0$ and $A$ is a gradient of a potential $A=\nabla V$, \eqref{eq: SDE} is called a generalized Langevin dynamic in \cite{Kup04,OP11}. It is also a special case of the so-called Nose-Hoover-Langevin dynamic studied in \cite{SDC07,LNT09,BFL11,FG11,BFL13}. In these papers \eqref{eq: SDE} is considered as a stochastic perturbation, via the external heat bath variable $Z_t$, of the classical deterministic Hamiltonian system for $(Q_t,P_t)$. The generalised Langevin and the Nose-Hoove-Langevin dynamics play an important role in molecular dynamics because they maintain canonical sampling of the original Hamiltonian system while being ergodic. When $B\neq 0$, the involution term $B\ast\rho_t$, which is often known as Vlasov term or mean-field term in the literature, models the self-interaction of the underlying physical system. In this case, \eqref{eq: SDE} belongs to the class of weakly interacting particle systems and have been used widely in applied sciences, especially in plasma physics, see e.g., \cite{DG87,DG89,Mel96,DPZ13b} and references therein. Combining both cases, the full dynamic \eqref{eq: SDE} can also be seen as stochastic perturbation (via the augmented variable $Z$) of Vlasov's dynamic similarly as the generalised Langevin dynamic. This motivates the name in the title of the present paper. 

The purpose of this paper is twofold. First, we prove the existence and uniqueness of a stationary solution as well as convergence to the stationary solution of other solutions of \eqref{eq: GLE}. Second, we provide a particle approximation of solutions of \eqref{eq: GLE}. We address both issues using the stochastic interpretation \eqref{eq: SDE} of \eqref{eq: GLE} and the so-called coupling technique introduced in \cite{Tal02,Vil09} and used further in \cite{BGM10,BCC11}. We now state the assumptions and main results of the present paper.

\subsection{Assumptions and main results of the paper}
Throughout the paper, we make the following assumption.
\begin{assumption}
\label{ass: assumption 1}
$B$ is an odd map on $\R^{3d}$. Furthermore, $A$ and $B$ are Lipschitz, i.e., there are non-negative constants $C_A, C_B$ such that for all $q_1,q_2\in \R^d$
\begin{equation}
\label{Lipschitz}
|A(q_1)-A(q_2)|\leq C_A |q_1-q_2|, \quad |B(q_1)-B(q_2)|\leq C_B|q_1-q_2|.
\end{equation}
\end{assumption}
The assumption that $B$ is odd is motivated by the fact that in applications, $B$ is often a gradient of a symmetric interaction potential which depends only on the distance of two different particles in the underlying physical system. The Lipschitz conditions \eqref{Lipschitz} are standard assumptions to ensure the existence and uniqueness of the SDE \eqref{eq: SDE} with square-integrable initial data, see e.g., \cite{Mel96}. It also guarantees the well-posedness of Eq. \eqref{eq: GLE}: it admits a unique measure solution $\rho_t\in \mathcal{P}_2(\R^{3d})$ for any $t>0$ provided that the initial data $\rho_0\in \mathcal{P}_2(\R^{3d})$, where $\mathcal{P}_2(\R^{3d})$ denotes the subspace of probability measures on $\R^{3d}$ with finite second moments.

The first result of the present paper is the following theorem concerning the existence and uniqueness of a stationary solution as well as convergence towards to the stationary solution of all solutions of \eqref{eq: GLE}. The trend to equilibrium is measured in terms of the Wasserstein distance $W_2$ on $\mathcal{P}_2(\R^{3d})$. In particular, $W_2(\mu,\nu)$, with $\mu,\nu\in\mathcal{P}_2(\R^{3d})$, can be formulated as the \textit{infimum} over a set of all coupled random variables that distribute according to $\mu$ and $\nu$ respectively (more details follow in Definition \ref{eq: def W_2}). The usage of the Wasserstein distance has two advantages. Firstly, it makes use of the relationship between the SDE \eqref{eq: SDE} and Eq. \eqref{eq: GLE}, which is the underlying idea of the coupling technique, see Section \ref{sec: preliminary}. Secondly, the infimum formulation of the Wasserstein distance is convenient in estimating upper bounds, which is needed to prove convergence, by choosing any admissible couple of random variables.

\begin{thm}
\label{theo: longtime}
Under Assumption \ref{ass: assumption 1}, for all positive $\alpha,\beta$ and $\lambda$ there exists a positive constant $\eta_0$ such that, if $0\leq C_A+C_B<\eta_0$, then there exist positive constant $C$ and $C'$ such that
\begin{equation}
\label{eq: assertion 1 thm1}
W_2(\rho_t,\overline{\rho}_t)\leq C' \, e^{-C \, t}\, W_2(\rho_0,\overline{\rho}_0), \quad\text{for all}\quad t\geq 0,
\end{equation}
for all solutions $(\rho_t)_{t\geq 0}$ and $(\overline{\rho}_t)_{t\geq 0}$ of \eqref{eq: GLE} with respectively initial data $\rho_0$ and $\overline{\rho}_0$ in $\mathcal{P}_2(\R^{3d})$. Moreover, \eqref{eq: GLE} has a unique stationary solution $\rho_{\infty}$ and all solutions $(\rho_t)_{t\geq 0}$ converge towards it exponentially in the Wasserstein distance
\begin{equation}
\label{eq: assertion 2 thm1}
W_2(\rho_t,\rho_\infty)\leq C'\,e^{-C\, t}\,W_2(\rho_0,\rho_\infty).
\end{equation}
\end{thm}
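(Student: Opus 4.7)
My approach is the synchronous coupling method used in \cite{BGM10,BCC11}. Given two solutions $(\rho_t)$ and $(\overline{\rho}_t)$ of \eqref{eq: GLE}, I realise them as the laws of processes $(Q_t,P_t,Z_t)$ and $(\overline{Q}_t,\overline{P}_t,\overline{Z}_t)$ satisfying \eqref{eq: SDE} driven by the \emph{same} Brownian motion $(W_t)$, the initial vector $(Q_0,P_0,Z_0,\overline{Q}_0,\overline{P}_0,\overline{Z}_0)$ realising a $W_2$-optimal coupling of $\rho_0$ and $\overline{\rho}_0$. Writing $\delta Q = Q-\overline{Q}$ and likewise for $\delta P,\delta Z$, the noise cancels and the differences obey the pathwise-deterministic system
\[
d(\delta Q_t) = \delta P_t\,dt,\qquad d(\delta Z_t) = (-\lambda\,\delta P_t - \alpha\,\delta Z_t)\,dt,
\]
\[
d(\delta P_t) = \bigl[-\beta\,\delta Q_t - (A(Q_t) - A(\overline{Q}_t)) - (B*\rho_t(Q_t) - B*\overline{\rho}_t(\overline{Q}_t)) + \lambda\,\delta Z_t\bigr]\,dt.
\]
Since $W_2^2(\rho_t,\overline{\rho}_t) \le \E[|\delta Q_t|^2+|\delta P_t|^2+|\delta Z_t|^2]$, it suffices to show exponential decay of the right-hand side.

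The main obstacle is hypocoercive: dissipation appears only in the $\delta Z$ equation. A direct computation shows that $\E[|\delta Q|^2+|\delta P|^2+|\delta Z|^2]$ does not decrease because the $\lambda$-cross terms between $\delta P$ and $\delta Z$ cancel, leaving only $-2\alpha\E|\delta Z|^2$. I therefore introduce a modified quadratic Lyapunov functional
\[
\Phi(t) = \E\bigl[a|\delta Q_t|^2 + b|\delta P_t|^2 + c|\delta Z_t|^2 + 2d\,\delta Q_t\cdot\delta P_t + 2e\,\delta P_t\cdot\delta Z_t + 2f\,\delta Q_t\cdot\delta Z_t\bigr],
\]
and select positive weights $a,b,c,d,e,f$ such that (i) $\Phi(t)$ is equivalent to $\E[|\delta Q_t|^2+|\delta P_t|^2+|\delta Z_t|^2]$, which is equivalent to positive-definiteness of the associated block coefficient matrix, and (ii) in the unperturbed case $A=B=0$ one has $\Phi'(t) \le -2C_0\,\Phi(t)$ for some $C_0>0$ depending only on $\alpha,\beta,\lambda$. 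The crucial algebraic effect is that differentiating the cross term $2d\,\delta Q\cdot\delta P$ yields the negative contribution $-2d\beta|\delta Q|^2$, restoring dissipation in $\delta Q$; the cross term $2e\,\delta P\cdot\delta Z$ plays the analogous role for $\delta P$. Balancing these against the parasitic cross terms they generate is the analytic heart of the argument.

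For the perturbations, the Lipschitz bound on $A$ gives $|A(Q_t)-A(\overline{Q}_t)|\le C_A|\delta Q_t|$, while introducing an independent copy $(Q'_t,\overline{Q}'_t)$ of the coupled system together with the Lipschitz bound on $B$ yields
\[
|B*\rho_t(Q_t)-B*\overline{\rho}_t(\overline{Q}_t)| \le C_B\bigl(|\delta Q_t| + \E'|\delta Q'_t|\bigr),
\]
where $\E'$ denotes expectation over the primed variables. Inserting these into $\Phi'(t)$ and applying Cauchy--Schwarz produces an additional error bounded by $\kappa(C_A+C_B)\,\Phi(t)$ for a constant $\kappa>0$ depending only on the weights. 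Setting $\eta_0 := C_0/\kappa$ and assuming $C_A+C_B<\eta_0$ then gives $\Phi'(t)\le -C_0\,\Phi(t)$; Gr\"onwall's inequality and the equivalence in (i) deliver \eqref{eq: assertion 1 thm1}.

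The existence and uniqueness of the stationary solution follow from \eqref{eq: assertion 1 thm1} by a standard Banach fixed-point argument. Fix any $T>0$ with $C'e^{-CT}<1$; then the time-$T$ map $S_T\colon\mathcal{P}_2(\R^{3d})\to\mathcal{P}_2(\R^{3d})$ is a strict contraction on the complete metric space $(\mathcal{P}_2(\R^{3d}),W_2)$, hence admits a unique fixed point $\rho_\infty$. The semigroup property $S_tS_T = S_TS_t$ implies that $S_t\rho_\infty$ is also a fixed point of $S_T$, so equals $\rho_\infty$ for every $t\ge 0$ and $\rho_\infty$ is stationary; uniqueness among stationary solutions is immediate from \eqref{eq: assertion 1 thm1}. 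Specialising \eqref{eq: assertion 1 thm1} to $\overline{\rho}_t \equiv \rho_\infty$ yields \eqref{eq: assertion 2 thm1}. I expect the principal difficulty to be the explicit construction of weights $a,b,c,d,e,f$ satisfying (i) and (ii) simultaneously: this is a coupled system of inequalities whose solvability encodes the hypocoercive structure of the linearised dynamics, and the threshold $\eta_0$ depends delicately on the resulting tuning.
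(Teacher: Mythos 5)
Your proof is correct and follows essentially the same route as the paper: synchronous coupling of the two SDE realisations, a modified quadratic Lyapunov functional chosen to exploit the hypocoercive cross-terms, a Lipschitz perturbation bound, and a Banach contraction argument (the paper invokes \cite[Lemma~7.3]{CT07} for the last step) to produce the stationary measure. The one slight divergence is the $B$-term: the paper uses the oddness of $B$ to symmetrize the double integral over independent copies and obtain the centered-variance bound $-\E[q_t\cdot(B*\rho_t(Q_t)-B*\overline\rho_t(\overline Q_t))]\le C_B\,\E|q_t|^2$, whereas your independent-copy-plus-Lipschitz bound does not use oddness and yields only a somewhat larger constant, which still gives a positive threshold $\eta_0$.
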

We recall that $C_A$ and $C_B$ are respectively the Lipschitz constants of the confinement $A$ and interaction $B$ forces. The condition that $0\leq C_A+C_B<\eta_0$ in the theorem means that they are linear-like forces, and that the interaction is weak enough. The proof of this theorem is given in Section \ref{sec: Long time}.

We now describe the setup and state the second main result of the present paper. Let $(W^i_\cdot)_{i\geq 1}$ be independent standard Brownian motions on $\R^d$, and $(Q^{i}_0,P^{i}_0,Z^{i}_0)_{i\geq 1}$ be independent random vectors on $\R^{3d}$ with law $\rho_0\in \mathcal{P}(\R^{3d})$ and independent of $(W^i_\cdot)_{i\geq 1}$. Let $(Q^{(N)}_t,P^{(N)}_t,Z^{(N)}_t)_{t\geq 0}=(Q^{1,N}_t,\ldots,Q^{N,N}_t,P^{1,N}_t,\ldots,P^{N,N}_t,Z^{1,N}_t,\ldots,Z^{N,N}_t)_{t\geq 0}$ be the solution of the following SDE in $(\R^{3d})^N$:
\begin{equation}
\label{eq: many-SDE}
\begin{cases}
dQ^{i,N}_t=P^{i,N}_t\,dt,
\\dP^{i,N}_t=- \beta\, Q^{i,N}_t\,dt-A(Q^{i,N}_t)\,dt-\frac{1}{N}\sum_{j=1}^N B(Q^{i,N}_t-Q^{j,N}_t)\,dt+\lambda\,Z^{i,N}_t\,dt,
\\dZ^{i,N}_t=-\lambda\,P^{i,N}_t\,dt-\alpha\, Z^{i,N}_t\,dt+\sqrt{2}\,dW^{i}_t,
\\(Q^{i,N}_0,P^{i,N}_0,Z^{i,N}_0)=(Q^{i}_0,P^{i}_0,Z^{i}_0).
\end{cases}
\end{equation}
This is an interacting particle system since there is an interacting term $\frac{1}{N}\sum_{j=1}^N B(Q^{i,N}_t-Q^{j,N}_t)$ in the SDE for each $i$. However, the interaction is of weak type in the sense that the interacting term for each $i$ depends only on the empirical measure $\mu^N_t$ associated to the whole system
\begin{equation*}
\frac{1}{N}\sum_{j=1}^N B(Q^{i,N}_t-Q^{j,N}_t)=\int_{\R^{3d}} B(Q^{i,N}_t-q)\mu^N_t(dqdpdz),
\end{equation*}
where
\begin{equation}
\label{eq: empirical}
\mu^N_t(dqdpdz):=\frac{1}{N}\sum_{i=1}^N \delta_{(Q^{i,N}_t,P^{i,N}_t,Z^{i,N}_t)}(dqdpdz).
\end{equation}

On the other hand, we consider the following non-interacting particle system, which is a many-particle version of the SDE \eqref{eq: SDE}
\begin{equation}
\label{eq: many-SDE 2}
\begin{cases}
d\Q^{i}_t=\P^{i}_t\,dt,
\\d\P^{i}_t=- \beta\, \Q^{i}_t\,dt-A(\Q^{i}_t)\,dt-B\ast\mu_t(\Q^i_t)\,dt+\lambda\,\Z^{i}_t\,dt,
\\d\Z^{i}_t=-\lambda\,\P^{i}_t\,dt-\alpha\, \Z^{i}_t\,dt+\sqrt{2}\,dW^{i}_t,
\\(\Q^{i}_0,\P^{i}_0,\Z^{i}_0)=(Q^{i}_0,P^{i}_0,Z^{i}_0),
\end{cases}
\end{equation}
where $\mu_t$ is the distribution of $(\Q^{i}_t,\P^{i}_t,\Z^{i}_t)$. Since the initial data and the driving Brownian motions are independent $(Q^i_t,P^i_t,Z^i_t)_{t\geq}$ with $i\geq 1$ are independent. Furthermore, they are identically distributed and their common law evolves according to \eqref{eq: GLE}, so $\mu_t$ is the solution of \eqref{eq: GLE} at time $t$ with initial data $\rho_0$. In comparison with  the system in \eqref{eq: many-SDE}, the interacting term is replaced by $B\ast \mu_t(\Q^i_t)$, i.e., integrating of $B(Q^{i,N}_t-\cdot)$ with respect to $\mu_t$ instead of the empirical measure $\mu^N_t$. 

In the second main result of the present paper, we prove that as $N$ becomes larger and larger, the weakly interacting processes $(Q^{i,N}_t,P^{i,N}_t,Z^{i,N}_t)_{t\geq 0}$ in \eqref{eq: many-SDE} behaves more and more like the independent processes $(\Q^{i}_t,\P^{i}_t,\Z^{i}_t)_{t\geq 0}$ in \eqref{eq: many-SDE 2}. In other words, the particle system \eqref{eq: many-SDE} satisfies the property of propagation of chaos as defined in \cite{Szn91}, which we recall in Section \ref{sec: preliminary}.
\begin{thm}
\label{theo: progation of chaos}
Let $(Q^{i}_0,P^{i}_0,Z^{i}_0),~ i=1,\ldots, N$ be $N$ independent $\R^{3d}$-valued random variables with law $\rho_0\in \mathcal{P}_2(\R^{3d})$. Let $(Q^{i,N}_t,P^{i,N}_t,Z^{i,N}_t)_{t\geq 0, 1\leq i\leq N}$ and $(\Q^{i}_t,\P^{i}_t,\Z^{i}_t)_{t\geq 0}$  respectively be the solution of \eqref{eq: many-SDE} and \eqref{eq: many-SDE 2} with initial datum $(Q^{i}_0,P^{i}_0,Z^{i}_0),~ i=1,\ldots, N$. Under Assumption \ref{ass: assumption 1}, for all positive $\alpha,\beta,\lambda$ there exists a positive constant $\eta_0$ such that, if $0\leq C_A+C_B<\eta_0$, then there exists a positive constant $C$, independent of $N$, such that for all $i=1,\ldots, N$
\begin{equation}
\label{eq: propagation of chaos}
\sup_{t\geq 0} \E\left(\left|Q^{i,N}_t-\Q^i_t\right|^2+\left|P^{i,N}_t-\P^i_t\right|^2+\left|Z^{i,N}_t-\Z^i_t\right|^2\right)\leq \frac{C}{N}.
\end{equation}
As a consequence, the particle system \eqref{eq: many-SDE} satisfies the property of propagation of chaos. In addition, the law $\rho^{(1,N)}_t$ of any $(Q^{i,N}_t,P^{i,N}_t,Z^{i,N}_t)$ converges to the law $\mu_t$ of $(\Q^{i}_t,\P^{i}_t,\Z^{i}_t)$ uniformly in $t$ and in the Wasserstein metric
\begin{equation}
\label{eq: Wasserstein bound}
\sup_{t\geq 0} W_2(\rho^{(1,N)}_t, \rho_t)^2\leq \frac{C}{N}.
\end{equation}
\end{thm}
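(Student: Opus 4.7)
The plan is to set up a synchronous coupling between the two systems \eqref{eq: many-SDE} and \eqref{eq: many-SDE 2}: both are fed the same initial data $(Q^i_0,P^i_0,Z^i_0)$ and driven by the same Brownian motions $(W^i_t)_{i\geq 1}$. The difference process $(\Delta Q^i_t,\Delta P^i_t,\Delta Z^i_t):=(Q^{i,N}_t-\Q^i_t,P^{i,N}_t-\P^i_t,Z^{i,N}_t-\Z^i_t)$ then solves a noise-free random system of ODEs starting from zero at $t=0$, since the $\sqrt 2\,dW^i_t$ terms cancel in the $Z$ component. This is the simplification that makes the coupling technique effective and reduces the question to a Gronwall-type estimate on $\E(|\Delta Q^i_t|^2+|\Delta P^i_t|^2+|\Delta Z^i_t|^2)$.

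The only non-routine piece in the drift of $\Delta P^i_t$ is the discrepancy between the empirical interaction and the true mean field, which I would split as
\begin{equation*}
\frac{1}{N}\sum_{j=1}^N B(Q^{i,N}_t-Q^{j,N}_t)-B\ast\mu_t(\Q^i_t)=I^i_t+F^i_t,
\end{equation*}
where $I^i_t=\frac{1}{N}\sum_j[B(Q^{i,N}_t-Q^{j,N}_t)-B(\Q^i_t-\Q^j_t)]$ is a Lipschitz remainder and $F^i_t=\frac{1}{N}\sum_j B(\Q^i_t-\Q^j_t)-B\ast\mu_t(\Q^i_t)$ is a mean-field fluctuation. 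By \eqref{Lipschitz}, $|I^i_t|\leq C_B|\Delta Q^i_t|+C_B N^{-1}\sum_j|\Delta Q^j_t|$. For $F^i_t$, the diagonal term $j=i$ vanishes because $B$ is odd, while for $j\neq i$ the $\Q^j_t$ are i.i.d.\ with law $\mu_t$ and independent of $\Q^i_t$, so $\E[B(\Q^i_t-\Q^j_t)\mid\Q^i_t]=B\ast\mu_t(\Q^i_t)$. Hence $F^i_t$ is a normalized sum of $N-1$ conditionally centred terms whose individual variances are controlled by $C_B^2\,\E|\Q^i_t-\Q^j_t|^2$, so that $\E|F^i_t|^2\leq C/N$ as soon as $\sup_{t\geq 0}\E|\Q^i_t|^2<\infty$. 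That uniform second-moment bound is itself a consequence of Theorem \ref{theo: longtime}, since exponential contraction in $W_2$ towards $\rho_\infty\in\mathcal{P}_2(\R^{3d})$ bounds the second moment of $\mu_t$ uniformly in time.

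I would then transplant to the difference process the same quadratic Lyapunov functional that drives the proof of Theorem \ref{theo: longtime}, namely a positive-definite form
\begin{equation*}
\mathcal{E}(\Delta Q,\Delta P,\Delta Z):=a|\Delta Q|^2+b|\Delta P|^2+c|\Delta Z|^2+2d\,\Delta Q\cdot\Delta P+\dotsb,
\end{equation*}
with weights tuned to $\alpha,\beta,\lambda$ so that the linear part of the coupled dynamics dissipates $\mathcal{E}$ strictly and $\mathcal{E}$ is equivalent to $|\Delta Q|^2+|\Delta P|^2+|\Delta Z|^2$. Differentiating $\E\,\mathcal{E}(\Delta Q^i_t,\Delta P^i_t,\Delta Z^i_t)$ along the coupled flow and exploiting exchangeability of the particle labels (so that $\E\,\mathcal{E}(\Delta Q^i_t,\Delta P^i_t,\Delta Z^i_t)$ does not depend on $i$), the Lipschitz contributions $A(Q^{i,N})-A(\Q^i)$ and $I^i_t$ are absorbed into the negative-definite part exactly as in Theorem \ref{theo: longtime} under the smallness condition $C_A+C_B<\eta_0$, while a Cauchy--Schwarz bound applied to the cross term involving $F^i_t$ contributes at most $C/N$, giving
\begin{equation*}
\frac{d}{dt}\E\,\mathcal{E}(\Delta Q^i_t,\Delta P^i_t,\Delta Z^i_t)\leq -C\,\E\,\mathcal{E}(\Delta Q^i_t,\Delta P^i_t,\Delta Z^i_t)+\frac{C'}{N}.
\end{equation*}
With the zero initial condition, Gronwall's lemma yields $\E\,\mathcal{E}(\Delta Q^i_t,\Delta P^i_t,\Delta Z^i_t)\leq C''/N$ uniformly in $t$, which is \eqref{eq: propagation of chaos} up to norm equivalence, and the Wasserstein bound \eqref{eq: Wasserstein bound} follows from the infimum formulation of $W_2$, since the synchronous coupling realises an admissible pair with marginals $\rho^{(1,N)}_t$ and $\mu_t=\rho_t$.

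The main obstacle is the entanglement between $I^i_t$ and the exchangeability argument: the empirical average $N^{-1}\sum_j|\Delta Q^j_t|$ mixes all particles, so the dissipation estimate must be carried out on the averaged Lyapunov quantity $N^{-1}\sum_i\E\,\mathcal{E}(\Delta Q^i_t,\Delta P^i_t,\Delta Z^i_t)$ and only then transferred to a single particle via exchangeability. This is precisely the step where the same smallness threshold $\eta_0$ from Theorem \ref{theo: longtime} must still dominate the extra $C_B$-contribution coming from the coupling between distinct particles; once this is engineered, everything else is a standard Gronwall estimate.
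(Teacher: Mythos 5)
Your proposal follows essentially the same path as the paper's proof: a synchronous coupling of \eqref{eq: many-SDE} and \eqref{eq: many-SDE 2} through shared Brownian motions and initial data, the same two-part decomposition of the interaction error into a Lipschitz remainder and a mean-field fluctuation, the same kind of quadratic Lyapunov functional tuned as in Theorem \ref{theo: longtime}, averaging over the exchangeable labels, the uniform second-moment bound as in Lemma \ref{lem: bounded of second moments}, and a Gronwall conclusion. One harmless slip worth flagging: the $j=i$ term of $F^i_t$ equals $\tfrac{1}{N}\bigl(B(0)-B\ast\mu_t(\Q^i_t)\bigr)=-\tfrac{1}{N}B\ast\mu_t(\Q^i_t)$, which does \emph{not} vanish even though $B(0)=0$; it is merely an $\mathcal{O}(1/N)$ summand that is deterministic given $\Q^i_t$ and lands in the same $\mathcal{O}(1/N)$ bound for $\E|F^i_t|^2$, which is exactly how the paper treats the sum over all $j$ without isolating the diagonal.
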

We stress that this theorem not only proves the convergence but also provides a quantitative rate of convergence. This theorem is proved in Section \ref{sec: propa}.
\subsection{Organization of the paper}
The rest of the paper is organized as follows. In Section \ref{sec: preliminary}, we recall some preliminary knowledge. We present the proof of Theorem \ref{theo: longtime} in Section \ref{sec: Long time} and that of Theorem \ref{theo: progation of chaos} in Section \ref{sec: propa}.
\section{Preliminaries}
\label{sec: preliminary}
\subsection{Wasserstein metric}
We recall that $\mathcal{P}_2(\R^{3d})$ denotes the space of probability measures $\mu$ on $\R^{3d}$ with finite second moment, i.e.,
\begin{equation*}
\int_{\R^{3d}}[|q|^2+|p|^2+|z|^2]\,\mu(dqdpdz)<\infty.
\end{equation*}
The Wasserstein distance $W_2$ between two probability measures $\mu,\nu\in \mathcal{P}_2(\R^{3d})$ is defined via
\begin{equation}
\label{eq: def W_2}
W_2(\mu,\nu)^2 = \inf_{(Q,P,Z),(\Q,\P,\Z)} \E \Big(|Q-\Q|^2+|P-\P|^2+|Z-\Z|^2\Big),
\end{equation}
where the infimum are taken over all triples $(Q,P,Z)$ and $(\Q,\P,\Z)$ of random variables on $\R^{3d}$ with laws $\mu$ and $\nu$ respectively. We note that convergence in Wasserstein metric is equivalent to narrow convergence (i.e., tested against continuous and bounded functions) plus convergence of the second moments. The Wassertein metric plays an important role in many fields of mathematics such as optimal transport and dissipative evolution equations. We refer to \cite{Vil03} and \cite{AGS08} for expositions on the topic. 

More generally, given a positive definite quadratic form $\Qq$ on $\R^{3d}$, one can define a $\Qq$-Wasserstein distance $W_{2\Qq}(\mu,\nu)$ between $\mu,\nu\in \mathcal{P}_2(\R^{3d})$ via
\begin{equation}
\label{eq: def W_2Q}
W_{2\Qq}(\mu,\nu)^2 = \inf_{(Q,P,Z),(\Q,\P,\Z)} \E \Qq \Big(Q-\Q,P-\P,Z-\Z\Big),
\end{equation}
where the infimum is also taken over all triples $(Q,P,Z)$ and $(\Q,\P,\Z)$ of random variables on $\R^{3d}$ with laws $\mu$ and $\nu$ respectively. Moreover, since $\sqrt{\Qq}$ is equivalent to the Euclidean norm $\sqrt{|q|^2+|p|^2+|z|^2}$, there exist positive constants $C$ and $C'$ such that 
\begin{equation}
\label{eq: equivalent of W2 and W2Q}
C W_2(\mu,\nu)\leq W_{2\Qq}(\mu,\nu)\leq C' W_2(\mu,\nu),
\end{equation}
for $\mu,\nu\in \mathcal{P}_2(\R^{3d})$.

The underlying idea of the coupling technique introduced in \cite{Tal02,Vil09} and used further in \cite{BGM10,BCC11} is based on \eqref{eq: equivalent of W2 and W2Q}. First, we find a positive definite quadratic form $\Qq$ on $\R^{3d}$ such that \eqref{eq: SDE} is dissipative with respect to $\Qq$. Then we apply \eqref{eq: equivalent of W2 and W2Q} to obtain estimations in terms of the Wasserstein metric.
We note that the Wasserstein distance can be defined via a variety of different ways, but for the purpose of this paper, the definition above will be the most useful because of two reasons as already mentioned in the introduction: it is facilitated with the coupling technique and is convenient in obtaining upper bound estimates. 
\subsection{Propagation of chaos}
We now recall the definition of propagation of chaos of a particle system introduced in \cite{Szn91}. Letting $X^{i,N}_t:=(Q^{i,N}_t,P^{i,N}_t,Z^{i,N}_t)$, and denoting $dx^{i,N}=dq^{i,N}dp^{i,N}dz^{i,N}$ for $i=1,\ldots,N$. Let $\rho^{(N)}_t(dx^{1,N},\ldots, dx^{N,N})$ be the join law of $(X^{1,N},\ldots,X^{N,N})$ where for each $i=1,\ldots, N$, $X^{i,N}_t$ satisfies \eqref{eq: many-SDE}. Let $k\geq 1$ be a fixed integer, and let $(i_1,\ldots,i_k)$ be any $k$-uple of $[1,N]$. The $(i_1,\ldots,i_k)$-marginal $\rho^{(i_1,\ldots,i_k)}$ of $\rho^{(N)}_t$ is defined via
\begin{equation*}
\rho^{(i_1,\ldots,i_k)}(A):= \rho^{(N)}_t(A\times \R^{3d(N-k)}),
\end{equation*}
for any Borel set $A\subset \R^{(3d)k}$.
We recall that $\mu_t$ is the law of $(\Q^1_t,\P^1_t,\Z^1_t)$, which solves Eq. \eqref{eq: GLE}. The particle system \eqref{eq: many-SDE} is said to  satisfy the property of propagation of chaos with respect to $\mu_t$ if $\rho^{(i_1,\ldots,i_k)}$ converges narrowly to $(\mu_t)^{\otimes k}$ for all $k$-uple $(i_1,\ldots,i_k)$.
According to \cite[Proposition 2.2]{Szn91}, this is equivalent to the statement that the empirical measure $\mu^N_t$ converges narrowly to $\mu_t$. We refer to \cite{Szn91} for more information on the topic.
\section{Proofs of the main theorems}
The proofs of Theorem \ref{theo: longtime} and Theorem \ref{theo: progation of chaos} are presented respectively in Section \ref{sec: Long time} and Section \ref{sec: propa}. The proofs follow the procedure of \cite{BGM10,BCC11} (see also \cite{Tal02,Vil09}) which use the stochastic interpretation \eqref{eq: SDE} of \eqref{eq: GLE} and the coupling technique explained in the previous section.
\subsection{Stationary solution and long time behaviour of the main equation: proof of Theorem \ref{theo: longtime}}
\label{sec: Long time}
The main ingredient of the proof of Theorem \ref{theo: longtime} is the following proposition, which is the content of the coupling technique.
\begin{proposition}
\label{pro: quadratic}
Under the assumption of Theorem \ref{theo: longtime}, there exists a positive constant $C$ and a positive definite quadratic form $\Qq$ on $\R^{3d}$ such that
\begin{equation}
\label{eq: quadratic}
W_{2\Qq}(\rho_t,\overline{\rho}_t)\leq e^{-C\,t}W_{2\Qq}(\rho_0,\overline{\rho}_0),\qquad\text{for all}~~ t\geq 0.
\end{equation}
\end{proposition}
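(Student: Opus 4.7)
The plan is to apply the synchronous coupling technique: let $(Q_t,P_t,Z_t)$ and $(\overline Q_t,\overline P_t,\overline Z_t)$ be the solutions of the SDE \eqref{eq: SDE} driven by \emph{the same} Brownian motion $W_t$, with laws $\rho_t$ and $\overline\rho_t$, and with initial data $(Q_0,P_0,Z_0),(\overline Q_0,\overline P_0,\overline Z_0)$ realising an optimal coupling for $W_{2\Qq}(\rho_0,\overline\rho_0)$. Setting $X_t:=Q_t-\overline Q_t$, $Y_t:=P_t-\overline P_t$, $U_t:=Z_t-\overline Z_t$, the noise cancels out and the differences satisfy a (random) ODE whose linear part is
\begin{equation*}
\dot X_t = Y_t,\qquad \dot Y_t = -\beta X_t + \lambda U_t + R^A_t + R^B_t,\qquad \dot U_t = -\lambda Y_t - \alpha U_t,
\end{equation*}
with remainders $R^A_t = -[A(Q_t)-A(\overline Q_t)]$ and $R^B_t=-[B\ast\rho_t(Q_t)-B\ast\overline\rho_t(\overline Q_t)]$. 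Since $(\rho_t,\overline\rho_t)$ is a particular coupling of the time-$t$ laws, $W_{2\Qq}(\rho_t,\overline\rho_t)^2\le \E\,\Qq(X_t,Y_t,U_t)$, while by optimality of the initial coupling $\E\,\Qq(X_0,Y_0,U_0)=W_{2\Qq}(\rho_0,\overline\rho_0)^2$. Hence it suffices to show exponential decay of $t\mapsto \E\,\Qq(X_t,Y_t,U_t)$.

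The heart of the proof is the construction of $\Qq$. Since the noise acts only on the $Z$-variable, only $|U|^2$ dissipates directly, so I will look for a positive definite quadratic form with cross terms
\begin{equation*}
\Qq(x,y,u) = a|x|^2 + b|y|^2 + c|u|^2 + 2d\langle x,y\rangle + 2e\langle y,u\rangle + 2f\langle x,u\rangle,
\end{equation*}
and then choose the parameters $a,b,c,d,e,f>0$ so that, along the \emph{purely linear} dynamics (ignoring $R^A,R^B$),
$$\tfrac{d}{dt}\Qq(X_t,Y_t,U_t)\le -2C_0\,\Qq(X_t,Y_t,U_t),$$
for some $C_0>0$. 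Differentiating $\Qq(X_t,Y_t,U_t)$ along the linear flow produces the squared norms $|X|^2,|Y|^2,|U|^2$ (with coefficients that can be made negative by exploiting $-\alpha|U|^2$ from the $Z$-dynamics and feeding it into $Y$ and $X$ via the $d\langle x,y\rangle$, $e\langle y,u\rangle$, $f\langle x,u\rangle$ cross terms) and cross-term residues that are killed by standard hypocoercivity choices (for instance $a=b\beta$ eliminates $\langle X,Y\rangle$, and $b=c$ eliminates $\langle Y,U\rangle$ after balancing). One then takes $d,e,f$ small relative to $a,b,c$ to keep $\Qq$ positive definite while securing strictly negative coefficients on each of $|X|^2,|Y|^2,|U|^2$. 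I expect this algebraic step — finding an explicit admissible choice rather than merely an existence argument — to be the main technical obstacle.

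Next I will absorb the Lipschitz remainders. By Assumption \ref{ass: assumption 1}, $|R^A_t|\le C_A|X_t|$. For $R^B_t$, introduce an independent copy $(Q'_t,\overline Q'_t)$ of $(Q_t,\overline Q_t)$ on an enlarged probability space; writing the convolution as a conditional expectation and splitting
\begin{equation*}
B\ast\rho_t(Q_t)-B\ast\overline\rho_t(\overline Q_t)=\E'[B(Q_t-Q'_t)-B(\overline Q_t-\overline Q'_t)]
\end{equation*}
and using $|B(q_1)-B(q_2)|\le C_B|q_1-q_2|$ gives $|R^B_t|\le C_B\,\E'[|X_t-X'_t|]\le C_B(|X_t|+\sqrt{\E|X_t|^2})$. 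Feeding these bounds into the cross products $\langle Y_t,R^A_t+R^B_t\rangle$, $\langle X_t,R^A_t+R^B_t\rangle$, $\langle U_t,R^A_t+R^B_t\rangle$ that appear in $\frac{d}{dt}\Qq$, applying Cauchy--Schwarz in expectation, and using $\E[|X_t|\sqrt{\E|X_t|^2}]\le \E|X_t|^2$, the Lipschitz contributions are controlled by $K(C_A+C_B)\,\E\,\Qq(X_t,Y_t,U_t)$ for some constant $K$ depending only on the chosen $\Qq$ and $\alpha,\beta,\lambda$.

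Combining the two estimates, for $\eta_0:=C_0/K$ and any $C_A+C_B<\eta_0$,
\begin{equation*}
\tfrac{d}{dt}\E\,\Qq(X_t,Y_t,U_t)\le -2C\,\E\,\Qq(X_t,Y_t,U_t),\qquad C:=C_0-K(C_A+C_B)>0.
\end{equation*}
Gr\"onwall's inequality then yields $\E\,\Qq(X_t,Y_t,U_t)\le e^{-2Ct}\E\,\Qq(X_0,Y_0,U_0)$, and passing to the infimum over couplings via the bounds recorded above gives \eqref{eq: quadratic}. The theorem \ref{theo: longtime} then follows from Proposition \ref{pro: quadratic} by the norm equivalence \eqref{eq: equivalent of W2 and W2Q}, with uniqueness of the stationary solution a standard consequence of the strict contraction.
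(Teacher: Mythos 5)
Your proposal is correct and follows essentially the same strategy as the paper: a synchronous coupling of two solutions of \eqref{eq: SDE}, a quadratic Lyapunov form $\Qq$ with carefully balanced cross terms so that the linear part dissipates, absorption of the Lipschitz remainders for small $C_A+C_B$, and Gr\"onwall, with the observation $W_{2\Qq}(\rho_t,\overline\rho_t)^2\le \E\,\Qq(X_t,Y_t,U_t)$ plus optimality of the initial coupling. The one small deviation is your treatment of the mean-field term: you use a crude pointwise bound $|R^B_t|\le C_B(|X_t|+\sqrt{\E|X_t|^2})$ via an independent copy, whereas the paper symmetrizes the double integral and exploits the oddness of $B$ to gain a factor of two; this only shrinks the admissible threshold $\eta_0$ and does not affect the validity of the argument.
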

\begin{proof}
Let $(Q_t,V_t,Z_t)$ and $(\overline{Q}_t,\overline{P}_t,\overline{Z}_t)$ be two $\R^{3d}$-valued stochastic processes evolving according to \eqref{eq: SDE} with the same Brownian motion $(W_t)_{t\geq 0}$ in $\R^d$. Let $(q_t,p_t,z_t):=(Q_t-\overline{Q}_t,P_t-\overline{P}_t,Z_t-\overline{Z}_t)$ be the difference between them. Then $(q_t,p_t,z_t)$ evolves according to
\begin{equation}
\label{eq: SDE2}
\begin{aligned}
dq_t&=p_t\,dt,
\\dp_t&=-\beta\, q_t\,dt-(A(Q_t)-A(\overline{Q}_t)\,dt-(B\ast\rho_t(Q_t)-B\ast\overline{\rho}_t(\overline{Q}_t))\,dt+\lambda\,z_t\,dt,
\\dz_t&=-\lambda\,p_t\,dt-\alpha\, z_t\,dt.
\end{aligned}
\end{equation}
Let $a_1,\cdots,a_5$ be positive constants, which will be specified later on. Define
\begin{equation}
\label{eq: Q1}
\Qq (q,p,z)=a_1|q|^2+a_2|p|^2+a_3|z|^2+2a_4 q\cdot p+2a_5 q\cdot z+2 p\cdot z.
\end{equation}
Assume that $(q_t,p_t,z_t)$ evolves according to \eqref{eq: SDE2}. Then we have
\begin{align}
\label{eq: estimate 1}
&\frac{d}{dt} \Qq(q_t,p_t,z_t)\nonumber
\\&\qquad=\frac{d}{dt}(a_1|q_t|^2+a_2|p_t|^2+a_3|z_t|^2+2a_4 q_t\cdot p_t+2a_5 q_t\cdot z_t+2 p_t\cdot z_t)\nonumber
\\&\qquad= 2a_1 q_t\cdot p_t+2 a_2 p_t\cdot[- \beta\, q_t-(A(Q_t)-A(\overline{Q}_t)-(B\ast\rho_t(Q_t)-B\ast\overline{\rho}_t(\overline{Q}_t))+\lambda\,z_t]\nonumber
\\&\qquad\qquad+ 2a_3z_t\cdot[-\lambda\,p_t-
\alpha z_t]+2a_4|p_t|^2\nonumber
\\&\qquad\qquad+2a_4 q_t\cdot[-\beta\, q_t-(A(Q_t)-A(\overline{Q}_t)-(B\ast\rho_t(Q_t)-B\ast\overline{\rho}_t(\overline{Q}_t))+\lambda\,z_t]\nonumber
\\&\qquad\qquad+2a_5 p_t\cdot z_t+2a_5 q_t\cdot[-\lambda\,p_t-
\alpha z_t]+2p_t\cdot[-\lambda\,p_t-
\alpha z_t]\nonumber
\\&\qquad\qquad +2z_t\cdot[- \beta\, q_t-(A(Q_t)-A(\overline{Q}_t)-(B\ast\rho_t(Q_t)-B\ast\overline{\rho}_t(\overline{Q}_t))+\lambda\,z_t]\nonumber
\\&\qquad=(2a_1-2\lambda a_5-2a_2\beta)q_t\cdot p_t+(2\lambda a_4-2\alpha a_5-2\beta)q_t\cdot z_t\nonumber
\\&\qquad\qquad+(2\lambda a_2-2\lambda a_3+2a_5-2\alpha)\,p_t\cdot z_t -2\beta a_4\,|q_t|^2-(2\lambda-2a_4)|p_t|^2\nonumber
\\&\qquad\qquad-(2\alpha a_3-2\lambda)|z_t|^2
 -(2 a_2 p_t+2a_4 q_t+2z_t)(A(Q_t)-A(\overline{Q}_t))\nonumber
\\&\qquad\qquad -(2 a_2 p_t+2a_4 q_t+2z_t)(B\ast\rho_t(Q_t)-B\ast\overline{\rho}_t(\overline{Q}_t)).
\end{align}
Now we estimate the last two terms on the right-hand side of \eqref{eq: estimate 1} using the assumptions on $A$ and $B$. The first term is bounded from above by
\begin{align}
\label{eq: estimate 2}
& -(2 a_2 p_t+2a_4 q_t+2z_t)(A(Q_t)-A(\overline{Q}_t))\nonumber
 \\&\qquad\leq 2 a_2 C_A|q_t||p_t|+2a_4 C_A|q_t|^2+2 C_A|q_t||z_t|\nonumber
 \\&\qquad\leq a_2 C_A (|q_t|^2+|p_t|^2)+2a_4 C_A|q_t|^2+C_A(|q_t|^2+|z_t|^2)\nonumber
 \\&\qquad= (a_2+2a_4+1)C_A |q_t|^2+ a_2 C_A |p_t|^2+C_A |z_t|^2.
\end{align}
The second term is a bit more intricate. Let $\Pi_t$ be the joint law of $(Q_t,P_t,Z_t;\overline{Q}_t,\overline{P}_t,\overline{Z}_t)$ on $\R^{3d}\times \R^{3d}$. Then its marginals on $\R^{3d}$ are distributions $\rho_t$ and $\overline{\rho}_t$ of $(Q_t,P_t,Z_t)$ and $(\overline{Q}_t,\overline{P}_t,\overline{Z}_t)$ respectively.
We have
\begin{align*}
B\ast\rho_t(Q_t)-B\ast\overline{\rho}_t(\overline{Q}_t)&=\int_{\R^{3d}} B(Q_t-q)\,d\rho_t(q,p,z)-\int_{\R^{3d}} B(\overline{Q}_t-\overline{q})\,d\overline{\rho}_t(\overline{q},\overline{p},\overline{z})
\\&=\int_{\R^{6d}}( B(Q_t-q)-B(\overline{Q}_t-\overline{q}))\,d\Pi_t(q,p,z;\overline{q},\overline{p},\overline{z}).
\end{align*}
Hence
\begin{align*}
&-2 \E \left[q_t\cdot (B\ast\rho_t(Q_t)-B\ast\overline{\rho}_t(\overline{Q}_t))\right]
\\&=-2\int_{\R^{12 d}}(Q-\overline{Q})\cdot (B(Q-q)-B(\overline{Q}-\overline{q}))\, d\Pi_t(Q,P,Z;\overline{Q},\overline{P},\overline{Z})\,\,d\Pi_t(q,p,z;\overline{q},\overline{p},\overline{z})
\\&\overset{(*)}{=}-\int_{\R^{12 d}}\left((Q-q)-(\overline{Q}-\overline{q})\right)\cdot (B(Q-q)-B(\overline{Q}-\overline{q}))\, d\Pi_t(Q,P,Z;\overline{Q},\overline{P},\overline{Z})\,\,d\Pi_t(q,p,z;\overline{q},\overline{p},\overline{z})
\\&\leq C_B\int_{\R^{12 d}}\left|(Q-q)-(\overline{Q}-\overline{q})\right|^2\, d\Pi_t(Q,P,Z;\overline{Q},\overline{P},\overline{Z})\,\,d\Pi_t(q,p,z;\overline{q},\overline{p},\overline{z})
\\&=2 C_B \left[\int_{\R^{6 d}}\left|Q-\overline{Q}\right|^2\, d\Pi_t(Q,P,Z;\overline{Q},\overline{P},\overline{Z})-\left(\int_{\R^{6 d}}(Q-\overline{Q})\, d\Pi_t(Q,P,Z;\overline{Q},\overline{P},\overline{Z})\right)^2\right]
\\&\leq 2C_B \E |q_t|^2,
\end{align*}
where we have used the fact that $B$ is odd to obtain the equality $(*)$.  So, we have just proved that
\begin{equation}
\label{eq: estimate 3}
-\E \left[q_t\cdot (B\ast\rho_t(Q_t)-B\ast\overline{\rho}_t(\overline{Q}_t))\right]\leq C_B \E |q_t|^2.
\end{equation}
Similarly, we have
\begin{align*}
&-2 \E \left[p_t\cdot (B\ast\rho_t(Q_t)-B\ast\overline{\rho}_t(\overline{Q}_t))\right]
\\&=-2\int_{\R^{12 d}}(P-\overline{P})\cdot (B(Q-q)-B(\overline{Q}-\overline{q}))\,d\Pi_t(q,p,z;\overline{q},\overline{p},\overline{z})\, d\Pi_t(Q,P,Z;\overline{Q},\overline{P},\overline{Z})
\\&=-\int_{\R^{12 d}}\left((P-p)-(\overline{P}-\overline{p})\right)\cdot (B(Q-q)-B(\overline{Q}-\overline{q}))\,d\Pi_t(q,p,z;\overline{q},\overline{p},\overline{z})\, d\Pi_t(Q,P,Z;\overline{Q},\overline{P},\overline{Z})
\\&\leq C_B\int_{\R^{12 d}}\left|(P-p)-(\overline{P}-\overline{p})\right|\cdot \left|(Q-q)-(\overline{Q}-\overline{q})\right|\,d\Pi_t(q,p,z;\overline{q},\overline{p},\overline{z})\, d\Pi_t(Q,P,Z;\overline{Q},\overline{P},\overline{Z})
\\&\leq \frac{C_B}{2}\int_{\R^{12 d}}\left[|(P-p)-(\overline{P}-\overline{p})|^2+|(Q-q)-(\overline{Q}-\overline{q})|^2\right]\,d\Pi_t(q,p,z;\overline{q},\overline{p},\overline{z})\, d\Pi_t(Q,P,Z;\overline{Q},\overline{P},\overline{Z})
\\&=\frac{C_B}{2}\int_{\R^{12 d}}\left[|(P-\overline{P})-(p-\overline{p})|^2+|(Q-\overline{Q})-(q-\overline{q})|^2\right]\,d\Pi_t(q,p,z;\overline{q},\overline{p},\overline{z})\, d\Pi_t(Q,P,Z;\overline{Q},\overline{P},\overline{Z})
\\&\leq C_B \E (|q_t|^2+|p_t|^2),
\end{align*}
i.e.,
\begin{equation}
\label{eq: estimate 4}
-2 \E \left[p_t\cdot (B\ast\rho_t(Q_t)-B\ast_q f[\overline{\rho}_t](\overline{Q}_t))\right]\leq C_B \E (|q_t|^2+|p_t|^2).
\end{equation}
In the same way, we also obtain
\begin{equation}
\label{eq: estimate 5}
-2 \E \left[z_t\cdot (B\ast\rho_t(Q_t)-B\ast\overline{\rho}_t(\overline{Q}_t))\right]\leq C_B \E (|q_t|^2+|z_t|^2).
\end{equation}
Substituting estimates from \eqref{eq: estimate 2} to \eqref{eq: estimate 5} into \eqref{eq: estimate 1}, we get
\begin{align}
\label{eq: estimate 6}
\frac{d}{dt}\Qq(q_t,p_t,z_t)&\leq (2a_1-2\lambda a_5-2a_2\beta)\,q_t\cdot p_t+[2\lambda a_4-2\alpha a_5-2\beta]\,q_t\cdot z_t\nonumber
\\&\qquad+[2\lambda a_2-2\lambda a_3+2a_5-2\alpha]\,p_t\cdot z_t -[2\beta a_4-(a_2+2a_4+1)(C_A+C_B)]\, |q_t|^2\nonumber
\\&\qquad-[2\lambda-2a_4-a_2 (C_A+C_B))\, |p_t|^2-[2\alpha a_3-2\lambda- (C_A+C_B)]\,|z_t|^2.
\end{align}
Set $\eta :=C_A+C_B$. We choose $a_1,\ldots, a_5$ such that
\begin{equation}
\label{eq: a1-a5}
\begin{cases}
2a_1-2\lambda a_5-2a_2\beta=0,
\\2\lambda a_4-2\alpha a_5-2\beta=0,
\\2\lambda a_2-2\lambda a_3+2a_5-2\alpha=0,
\\2\beta a_4 -(a_2+2a_4+1)\eta>0,\\
2\lambda-2a_4-a_2 \eta>0,\\
2\alpha a_3-2\lambda-\eta>0,
\end{cases}
\end{equation}
and that $\Qq(q,p,z)$ is a positive definite quadratic form on $\R^{3d}$. Firstly, we choose $a_4=\frac{\lambda}{2}$ and express $a_1,a_2,a_5$ in terms of $a_3$. Then we show that there exists $a_3$ and $\eta_0>0$ such that all the requirements are fulfilled for $0<\eta<\eta_0$.  Conditions in \eqref{eq: a1-a5} are equivalent to 
\begin{equation}
\label{eq: a1-a5 2}
\begin{cases}
a_5=\frac{\lambda a_4-\beta}{\alpha}=\frac{\lambda^2/2-\beta}{\alpha},\\
a_2=a_3+\frac{\alpha}{\lambda}-\frac{a_5}{\lambda},\\	
a_1=\lambda a_5+\beta a_2=\lambda a_5+\beta \Big(a_3+\frac{\alpha}{\lambda}-\frac{a_5}{\lambda}\Big),\\
2\beta a_4-(a_2+2a_4+1)\,\eta=\lambda\beta -(a_3+\frac{\alpha}{\lambda}-\frac{a_5}{\lambda}+\lambda+1)\,\eta>0,\\
2\lambda-2a_4-a_2\eta=\lambda-\Big(a_3+\frac{\alpha}{\lambda}-\frac{a_5}{\lambda}\Big)\eta>0,\\
2 \alpha a_3-2\lambda-\eta>0.
\end{cases}
\end{equation}
%
Let $a_3=2+\tilde{a}_3$, where $\tilde{a}_3$ will be chosen later on. Then we have
\begin{align*}
\Qq(q,p,z)&= \Big(a_5(\lambda-\frac{\beta}{\lambda})+\beta(a_3+\frac{\alpha}{\lambda})\Big)\,|q|^2+ \Big(a_3+\frac{\alpha}{\lambda}-\frac{a_5}{\lambda}\Big)\, |p|^2+ a_3 |z|^2+\lambda\, q\cdot p
\\&\qquad-2a_5\, q\cdot z + 2 p\cdot z
\\&=\Big(a_5(\lambda-\frac{\beta}{\lambda})+\beta(\tilde{a}_3+\frac{\alpha}{\lambda}+2)\Big)\,|q|^2+ \Big(\tilde{a}_3+\frac{\alpha}{\lambda}+2-\frac{a_5}{\lambda}\Big)\, |p|^2+ (2+\tilde{a}_3 )|z|^2
\\&\qquad+\lambda\, q\cdot p-2a_5\, q\cdot z + 2 p\cdot z
\\&=\Big(a_5(\lambda-\frac{\beta}{\lambda})+\beta(\tilde{a}_3+\frac{\alpha}{\lambda}+2)-a_5^2-\frac{\lambda^2}{4}\Big)|q|^2+\Big(\tilde{a}_3+\frac{\alpha}{\lambda}-\frac{a_5}{\lambda}\Big)\, |p|^2+\tilde{a}_3|z|^2
\\&\qquad +\Big(\frac{\lambda}{2}q+p\Big)^2+(a_5q+z)^2+(p+z)^2.
\end{align*}
Now we choose $\tilde{a}_3$ such that
\begin{equation}
\label{eq: tilde a_3}
\begin{cases}
f_1(\tilde{a}_3):=a_5(\lambda-\frac{\beta}{\lambda})+\beta(\tilde{a}_3+\frac{\alpha}{\lambda}+2)-a_5^2-\frac{\lambda^2}{4}>0,\\
f_2(\tilde{a}_3):=\tilde{a}_3+\frac{\alpha}{\lambda}-\frac{a_5}{\lambda}>0,\\
f_3(\tilde{a}_3):=\tilde{a}_3>0,\\
f_4(\tilde{a}_3):=\tilde{a}_3+\frac{\alpha}{\lambda}+\lambda+3-\frac{a_5}{\lambda}>0,\\
f_5(\tilde{a}_3):=2\alpha(2+\tilde{a}_3)-2\lambda>0.
\end{cases}
\end{equation}
Since all $f_i$, with $i=1,\ldots, 5$, are  linear functions of $\tilde{a}_3$ with positive slopes, it is obvious that there exists $\tilde{a}_3$ that is large enough so that all of them are positive. We choose such a $\tilde{a}_3$ and define
\begin{equation}
\label{eq: eta_0}
\eta_0:=\min\Big\{\frac{\lambda\beta}{f_4(\tilde{a}_3)},\frac{\lambda}{2+f_2(\tilde{a_3})},f_5(\tilde{a}_3)\Big\}>0.
\end{equation}
It then follows from \eqref{eq: tilde a_3} and \eqref{eq: eta_0} that \eqref{eq: a1-a5 2} are fulfilled for every $0<\eta<\eta_0$ and $\Qq$ is a positive definite quadratic on $\R^{3d}$.

From \eqref{eq: estimate 6} it follows that there exists a positive constant $C$, depending only on $\alpha,\beta,\lambda, C_A$ and $C_B$ such that
\begin{equation}
\label{eq: estimate 8}
\frac{d}{dt}\E\,\Qq(q_t,p_t,z_t)\leq -C\,\E\,[|q_t|^2+|p_t|^2+|z_t|^2], \quad\text{for all}\quad t\geq 0.
\end{equation}
Since in a finite dimensional space, all norms are equivalent to the Euclidean norm, the right hand side of \eqref{eq: estimate 8} is bounded above by $-C\,\E\,\Qq(q_t,p_t,z_t)$ for some positive constant $C$, i.e.,
\begin{equation}
\frac{d}{dt}\E\,\Qq(q_t,p_t,z_t)\leq -C\,\E\,\Qq(q_t,p_t,z_t), \quad\text{for all}\quad t\geq 0.
\end{equation}
By Gronwall's inequality, we obtain that
\begin{equation*}
\E\,\Qq(q_t,p_t,z_t)\leq e^{-Ct}\,\E\,\Qq(q_0,p_0,z_0),\quad\text{for all}\quad t\geq 0.
\end{equation*}
We can re-write the above inequality using definition of $(q_t,p_t,z_t)$ as follows
\begin{equation*}
\E\,\Qq((Q_t,P_t,Z_t)-(\overline{Q}_t,\overline{P}_t,\overline{Z}_t))\leq e^{-Ct}\,\E\,\Qq((Q_0,P_0,Z_0)-(\overline{Q}_0,\overline{P}_0,\overline{Z}_0)),
\end{equation*}
for all $t\geq 0$. Now we optimize over $(Q_0,P_0,Z_0)$ and $(\overline{Q}_0,\overline{P}_0,\overline{Z}_0)$ with respective laws $\rho_0$ and $\overline{\rho}_0$ to get
\begin{equation}
\E\Qq((Q_0,P_0,Z_0)-(\overline{Q}_0,\overline{P}_0,\overline{Z}_0))=W_{2\Qq}(\rho_0,\overline{\rho}_0)^2.
\end{equation}
Then using the relation $W_{2\Qq}(\rho_t,\overline{\rho}_t)^2\leq \E \Qq((Q_t,P_t,Z_t)-(\overline{Q}_t,\overline{P}_t,\overline{Z}_t))$, we have
\begin{equation}
W_{2\Qq}(\rho_t,\overline{\rho}_t)\leq e^{-C\,t}\,W_{2\Qq}(\rho_0,\overline{\rho}_0).
\end{equation}
This completes the proof of the proposition. 
\end{proof}
The following lemma will be used later to prove the existence and uniqueness of the stationary measure.
\begin{lem}
\label{lem: stationary}
\cite[Lemma 7.3]{CT07} Let $(\M,\mathrm{dist})$ be a complete metric space and $S(t)$ be a continuous semigroup on $(\M,\mathrm{dist})$. Assume that there exists $0<L(t)<1$ such that
\begin{equation*}
\mathrm{dist}(S(t)(x),S(t)(y))\leq L(t)\mathrm{dist}(x,y),
\end{equation*}
for all $t>0$, and $x,y$ in $\M$. Then there exists a unique stationary point $x_\infty\in\M$, i.e., $S(t)(x_\infty)=x_\infty$ for all $t\geq 0$.
\end{lem}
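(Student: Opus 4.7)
The plan is to reduce the claim to the classical Banach fixed point theorem applied to the time-$t_0$ map $S(t_0)$ for a single fixed $t_0 > 0$, and then propagate the resulting fixed point to every time $t \geq 0$ using the semigroup property.

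First I would fix any $t_0 > 0$. Since $S(t_0)\colon \M \to \M$ satisfies $\mathrm{dist}(S(t_0)x, S(t_0)y) \leq L(t_0)\,\mathrm{dist}(x,y)$ with $L(t_0) \in (0,1)$, the map $S(t_0)$ is a strict contraction on the complete metric space $(\M, \mathrm{dist})$. Banach's fixed point theorem then yields a unique $x_\infty \in \M$ with $S(t_0)(x_\infty) = x_\infty$.

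Next I would upgrade $x_\infty$ to a fixed point of the entire semigroup. Using the semigroup identity $S(t_0) \circ S(t) = S(t+t_0) = S(t) \circ S(t_0)$, for every $t \geq 0$ we have
\begin{equation*}
S(t_0)\bigl(S(t)(x_\infty)\bigr) = S(t)\bigl(S(t_0)(x_\infty)\bigr) = S(t)(x_\infty),
\end{equation*}
so $S(t)(x_\infty)$ is also a fixed point of $S(t_0)$. Uniqueness in Banach's theorem then forces $S(t)(x_\infty) = x_\infty$ for every $t \geq 0$. For uniqueness of the stationary point of the whole semigroup, if $y_\infty$ satisfies $S(t)(y_\infty) = y_\infty$ for all $t \geq 0$, then in particular $S(t_0)(y_\infty) = y_\infty$, and the uniqueness clause of Banach applied to $S(t_0)$ forces $y_\infty = x_\infty$.

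The argument is essentially routine and I do not expect a real obstacle. The continuity hypothesis on the semigroup is not actually needed for the core contraction step — the strict contraction property of $S(t_0)$ alone suffices to apply Banach — and the semigroup law handles the extension to all times automatically. The only mild point requiring care is commuting $S(t)$ past $S(t_0)$ in the correct order via the semigroup identity.
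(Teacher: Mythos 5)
The paper does not give its own proof of this lemma; it is cited directly from Carrillo and Toscani \cite[Lemma 7.3]{CT07}, so there is nothing internal to compare against. Your argument is correct and is in fact the standard proof of this kind of statement: apply the Banach fixed point theorem to a single time-slice $S(t_0)$, then use commutativity $S(t)S(t_0) = S(t+t_0) = S(t_0)S(t)$ to show $S(t)(x_\infty)$ is again a fixed point of $S(t_0)$, and invoke uniqueness in Banach to conclude $S(t)(x_\infty) = x_\infty$ for all $t$. Your observation that the continuity hypothesis on the semigroup is not used in this argument is also accurate — all you need is the strict contraction at one time $t_0$, the semigroup identity, and completeness of $\M$ — and the hypothesis that $0 < L(t) < 1$ for \emph{all} $t > 0$ is strictly stronger than what your proof requires. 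The only minor point worth making explicit is that $S(0) = \mathrm{id}$ (part of the semigroup axioms) so that the conclusion $S(t)(x_\infty) = x_\infty$ holds trivially at $t=0$ as well.
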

We are now ready to present the proof of Theorem \ref{theo: longtime}.
\begin{proof}[Proof of Theorem \ref{theo: longtime}] Let $\Qq(q,p,z)$ be the positive definite quadratic form on $\R^{3d}$ obtained in Proposition \ref{pro: quadratic}. Since $\Qq$ is equivalent to the canonical form  $|q|^2+|p|^2+|z|^2$, there exists positive constants $C'$ and $C''$ such that
\begin{equation*}
W_{2}(\rho_t,\overline{\rho}_t)\leq C'' \, W_{2\Qq}(\rho_t,\overline{\rho}_t)\leq C'' e^{-C\,t}\,W_{2\Qq}(\rho_0,\overline{\rho}_0)\leq C'  e^{-C\,t}\,W_{2}(\rho_0,\overline{\rho}_0), 
\end{equation*}
for all $t\geq 0$ and all solutions $(\rho_t)_{t\geq 0}$ and $(\overline{\rho}_t)_{t\geq 0}$ of \eqref{eq: GLE} by Proposition \ref{pro: quadratic}. This  proves the first assertion \eqref{eq: assertion 1 thm1} of Theorem \ref{theo: longtime}. Since $\sqrt{\Qq}$ is a norm on $\R^{3d}$, which is equivalent to the Euclidean norm $\sqrt{|q|^2+|p|^2+|z|^2}$, according to \cite{Bolley08}, the space $(\mathcal{P}_2(\R^{3d}),W_{2\Qq})$ is a complete metric space. The existence of a unique of stationary measure $\rho_{\infty}$ is then followed from the contraction property of Proposition \ref{pro: quadratic} and Lemma \ref{lem: stationary}. Moreover, taking $(\overline{\rho}_t)_{t\geq 0}\equiv \rho_\infty$  in \eqref{eq: assertion 1 thm1} we obtain \eqref{eq: assertion 2 thm1}.
\end{proof}
\subsection{Particle approximation of the main equation: proof of Theorem \ref{theo: progation of chaos}}
\label{sec: propa}
We start with proving that the second moments of all solutions of \eqref{eq: GLE} are finite provided that the second moment of the initial data is finite.
\begin{lem}
\label{lem: bounded of second moments}
Under Assumption \ref{ass: assumption 1}, for all positive $\alpha,\beta$ and $\lambda$ there exists a positive constant $\eta_0$ such that, if $0\leq C_A+C_B<\eta_0$, then $\sup_{t\geq 0}\int_{\R^{3d}}(|q|^2+|p|^2+|z|^2) d \rho_t(q,p,z)$ is finite for all solutions $(\rho_t)_{t\geq 0}$ of \eqref{eq: GLE} with initial data $\rho_0\in \mathcal{P}_2(\R^{3d})$.
\end{lem}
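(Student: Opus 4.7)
The plan is to mimic the quadratic-form computation of Proposition \ref{pro: quadratic}, but applied to a single trajectory of the SDE \eqref{eq: SDE} instead of to the difference of two coupled copies. Let $(Q_t,P_t,Z_t)_{t\ge 0}$ solve \eqref{eq: SDE} with $\rho_0\in\mathcal{P}_2(\R^{3d})$, set $u(t):=\E\,\Qq(Q_t,P_t,Z_t)$ with the same positive-definite $\Qq$ and constants $a_1,\dots,a_5$ chosen in \eqref{eq: Q1}--\eqref{eq: eta_0}, and apply It\^o's formula. The resulting drift has exactly the structure of \eqref{eq: estimate 1} with $(q_t,p_t,z_t)$ replaced by $(Q_t,P_t,Z_t)$ and the Lipschitz increments $A(Q_t)-A(\overline{Q}_t)$, $B*\rho_t(Q_t)-B*\overline{\rho}_t(\overline{Q}_t)$ replaced by $A(Q_t)$ and $B*\rho_t(Q_t)$; in addition there is a constant It\^o correction $2a_3 d$ coming from the quadratic variation of $\sqrt{2}\,dW_t$.

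First I would use the algebraic identities \eqref{eq: a1-a5 2} to annihilate the cross terms in $Q_t\cdot P_t$, $Q_t\cdot Z_t$, $P_t\cdot Z_t$, which leaves a strictly negative quadratic form in $(|Q_t|^2,|P_t|^2,|Z_t|^2)$. Next I would control the two drift terms involving $A$ and $B$. The Lipschitz bound \eqref{Lipschitz} gives $|A(Q_t)|\le |A(0)|+C_A|Q_t|$, while the oddness of $B$ forces $B(0)=0$, so
\begin{equation*}
|B*\rho_t(Q_t)|\le C_B\int_{\R^{3d}}|Q_t-q'|\,d\rho_t(q',p',z')\le C_B\bigl(|Q_t|+\E|Q_t|\bigr).
\end{equation*}
Young's inequality then bounds the mixed products $(2a_2 P_t+2a_4 Q_t+2Z_t)\cdot[A(Q_t)+B*\rho_t(Q_t)]$ in expectation by $C(C_A+C_B)\,\E(|Q_t|^2+|P_t|^2+|Z_t|^2)+K_0$, where $K_0$ depends only on $|A(0)|$, $a_1,\dots,a_5$ and dimension.

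Combining these bounds with the strictly negative quadratic piece, and using the equivalence between $\Qq$ and the Euclidean square norm, for $\eta_0$ possibly smaller than the one in Proposition \ref{pro: quadratic} and $C_A+C_B<\eta_0$ I would deduce
\begin{equation*}
u'(t)\le -C\,u(t)+K,\qquad t\ge 0,
\end{equation*}
for positive constants $C,K$ depending only on $\alpha,\beta,\lambda,|A(0)|$ and the dimension. Gronwall's inequality then gives $u(t)\le u(0)\,e^{-Ct}+K/C$, and a final application of the norm equivalence converts this into the desired uniform-in-time bound on $\int(|q|^2+|p|^2+|z|^2)\,d\rho_t$.

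The main obstacle is the mean-field term $B*\rho_t(Q_t)$, which couples the state $Q_t$ to the marginal of $\rho_t$ and therefore cannot be handled by a purely pointwise Lipschitz estimate. The rescuing observation is the one already exploited in \eqref{eq: estimate 3}--\eqref{eq: estimate 5}: oddness of $B$ yields $B(0)=0$, so $|B*\rho_t(Q_t)|$ is controlled linearly by $|Q_t|$ and $\E|Q_t|$, and the latter is bounded by $\sqrt{u(t)}$ up to a constant, leaving only contributions that can be absorbed into the negative term $-C\,u(t)$ or into the additive constant $K$.
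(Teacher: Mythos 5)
Your proposal is correct and follows essentially the same route as the paper's ``direct proof'' of Lemma~\ref{lem: bounded of second moments}: a Lyapunov computation with the same anisotropic quadratic form $\Qq$, cancellation of cross terms by the linear relations on $a_1,\dots,a_5$, Young's inequality on the $A$- and $B$-contributions, and Gronwall. (Working with It\^o's formula on the SDE and working with the weak form of the PDE are the same calculation.) The one genuine difference is in the $B$-term. You bound $|B*\rho_t(Q_t)|\le C_B(|Q_t|+\E|Q_t|)$ pointwise using $B(0)=0$ and then apply Cauchy--Schwarz/Young to $\E[\,|P_t|\,\E|Q_t|\,]$, etc. The paper instead keeps the double integral $\int\!\int (a_2p+a_4q+z)\cdot B(q-q')\,d\rho_t\,d\rho_t$, symmetrizes it using the oddness of $B$ (swapping $(q,p,z)\leftrightarrow(q',p',z')$), and only then applies Lipschitz estimates to the symmetrized form $[a_2(p-p')+a_4(q-q')+(z-z')]\cdot B(q-q')$. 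Both use oddness (you through $B(0)=0$, the paper through the antisymmetry $B(q'-q)=-B(q-q')$), but the symmetrization saves a factor of~$2$ in the resulting coefficients, so your $\eta_0$ would come out somewhat smaller; this is immaterial for the lemma as stated. One thing you may wish to add: the paper also records a shorter argument that does not redo any Lyapunov computation, namely bounding $\int(|q|^2+|p|^2+|z|^2)\,d\rho_t=W_2(\rho_t,\delta_0)^2\le 2\big(W_2(\rho_t,\rho_\infty)^2+W_2(\delta_0,\rho_\infty)^2\big)$ and invoking the already-proved contraction estimate~\eqref{eq: assertion 2 thm1}. The paper includes the direct proof anyway to make the lemma independent of the long-time result, which is the same reason your self-contained computation has value.
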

The assertion of this lemma can be obtained using Theorem \ref{theo: longtime}. Indeed, let $\rho_\infty$ be the unique stationary measures of \eqref{eq: GLE} obtained in Theorem \ref{theo: longtime}. For all solutions $(\rho_t)_{t\geq 0}$ of \eqref{eq: GLE} with initial data $\rho_0\in \mathcal{P}_2(\R^{3d})$, from Theorem \ref{theo: longtime}, we have for all $t\geq 0$
\begin{align*}
\int_{\R^{3d}}(|q|^2+|p|^2+|z|^2) d \rho_t(q,p,z)&=W_2(\rho_t,\delta_{0})^2
\\&\leq 2\left(W_2(\rho_t,\rho_\infty)^2+W_2(\delta_0,\rho_\infty)^2\right)
\\&\overset{\eqref{eq: assertion 2 thm1}}{\leq}  2\left( C'\,e^{-C\, t}\,W_2(\rho_0,\rho_\infty)^2+W_2(\delta_0,\rho_\infty)^2\right)
\\& \leq 2\left( C'\,W_2(\rho_0,\rho_\infty)^2+W_2(\delta_0,\rho_\infty)^2\right).
\end{align*}
However, to show that the finiteness of the second moments, and hence the property of propagation of chaos of the system \eqref{eq: many-SDE}, is independent of the long time behaviour of the solutions, we provide a direct proof using again the coupling technique similarly as in Theorem \ref{theo: longtime}. This proof also demonstrates further the usefulness of the coupling technique.
\begin{proof}[A direct proof of Lemma \ref{lem: bounded of second moments} using coupling technique] Let $\rho_t$ be a solution of \eqref{eq: GLE} with initial data $\rho_0$ in $\mathcal{P}_2(\R^{3d})$. Let $a_1,\cdots,a_5$ be positive constants, which will be specified later on. Define
\begin{equation}
\label{eq: Q1 second moment}
\Qq (q,p,z)=a_1|q|^2+a_2|p|^2+a_3|z|^2+2a_4 q\cdot p+2a_5 q\cdot z+2 p\cdot z.
\end{equation}
Then
\begin{align}
&\frac{d}{dt}\int_{\R^{3d}} \Qq(q,p,z)\rho_t(dqdpdz)=\int_{\R^{3d}} \Qq(q,p,z)\partial_t\rho_t\,dqdpdz\nonumber
\\&\qquad=\int_{\R^{3d}}\Qq(q,p,z)\Big(-\div_q(p\rho_t)+\div_p[(\beta q+A(q)+B\ast \rho_t-\lambda z)\rho_t]\nonumber
\\&\hspace*{4.5cm}+\div_z[(\lambda p+\alpha z)\rho_t]+\Delta_z \rho_t\Big)\,dqdpdz\nonumber
\\&\qquad=2a_3 d + 2\int_{\R^{3d}}\Big[p\cdot(a_1 q+a_4p+a_5z)-(\beta q+A(q)+B\ast\rho_t(q)-\lambda z)\cdot (a_2 p+a_4q+z)\nonumber
\\&\hspace*{4cm}-(\lambda p+\alpha z)\cdot(a_3z +a_5 q+p)\Big]\rho_t(dqdpdz)\nonumber
\\&\qquad=2a_3d+2\int_{\R^{3d}}\Big[-\beta a_4\,|q|^2+(a_1-\beta a_2-\lambda a_5)\,q\cdot p+(-\beta +\lambda a_4-\alpha a_5)\,q\cdot z\nonumber
\\&\hspace*{4cm}-(\lambda-a_4)\,|p|^2+(a_5+\lambda a_2-\lambda a_3-\alpha)\,p\cdot z-(\alpha a_3-\lambda)\,|z|^2\nonumber
\\&\hspace*{5cm}-(A(q)+B\ast \rho_t(q))\cdot(a_2p+a_4q+z)\Big]\rho_t(dqdpdz).\label{eq: estimate Q 1}
\end{align}
We now estimate the last two terms involving $A$ and $B$ in \eqref{eq: estimate Q 1} using Young's inequality and assumptions on $A$ and $B$.

For the $A$-terms, we get
\begin{align}
-2 (a_2 p+a_4 q+z)\cdot A(q)&=-2(a_2 p+a_4 q+z)\cdot(A(q)-A(0)+A(0))\nonumber
\\&\leq 2C_A(a_2 |p|+a_4|q|+|z|)|q|-2(a_2 p+a_4 q+z)\cdot A(0),\nonumber
\\&\leq C_A((2a_4+a_2+1)|q|^2+a_2|p|^2+|z|^2)+(\beta a_4 |q|^2+\lambda |p|^2+\alpha a_3 |z|^2)\nonumber
\\&\qquad+\left(\frac{a_4}{\beta}+\frac{a_2^2}{\lambda}+\frac{1}{\alpha a_3}\right)|A(0)|^2.\label{A terms}
\end{align}
For the $B$-terms, we obtain
\begin{align}
&-2\int_{\R^{3d}}(a_2 p+a_4 q+z)\cdot B\ast \rho_t(q)\,\rho_t(dqdpdz)\nonumber
\\&\qquad=-2\int_{\R^{6d}}(a_2 p+a_4 q+z)\cdot B(q-q')\rho_t(dq'dp'dz') \rho_t(dq,dp,dz)\nonumber
\\&\qquad =-\int_{\R^{6d}}[a_2(p-p')+a_4(q-q')+(z-z')]\cdot B(q-q')\rho_t(dq'dp'dz') \rho_t(dq,dp,dz)\nonumber
\\&\qquad\leq C_B\,\int_{\R^{6d}}[a_4|q-q'|+a_2|p-p'|+|z-z'|]\,|q-q'|\rho_t(dq'dp'dz') \rho_t(dq,dp,dz)\nonumber
\\&\qquad\leq C_B\,\int_{\R^{6d}}\Big[a_4|q-q'|^2+\frac{a_2}{2}(|q-q'|^2+|p-p'|^2)\nonumber
\\&\hspace*{5cm}+\frac{1}{2}(|q-q'|^2+|z-z'|^2)\Big]\,\rho_t(dq'dp'dz') \rho_t(dq,dp,dz)\nonumber
\\&\qquad\overset{(*)}{\leq} C_B\,\int_{\R^{3d}}\Big[2a_4|q|^2+a_2(|q|^2+|p|^2)+(|q|^2+|z|^2)\Big]\,\rho_t(dq,dp,dz)\nonumber
\\&\qquad=C_B\int_{\R^{3d}}\big[(2a_4+a_2+1)|q|^2+a_2|p^2|+|z|^2\big]\,\rho_t(dq,dp,dz),\label{B terms}
\end{align}
where to obtain $(*)$ we have used the following estimate
\begin{align*}
\int_{\R^{6d}}|q-q'|^2\,\rho_t(dq'dp'dz') \rho_t(dq,dp,dz)&=\int_{\R^{6d}}(|q|^2+|q'|^2-2q\cdot q')\,\rho_t(dq'dp'dz') \rho_t(dq,dp,dz)
\\&=2\int_{\R^{3d}}|q|^2\rho_t(dq,dp,dz)-2\Big(\int_{\R^{3d}}q\rho_t(dq,dp,dz)\Big)^2
\\&\leq 2\int_{\R^{3d}}|q|^2\rho_t(dq,dp,dz),
\end{align*}
and similarly for other terms involving $|p-p'|^2$ and $|z-z'|^2$.

To proceed, we first choose $a_1,\ldots,a_5$ such that the terms involving inner products in \eqref{eq: estimate Q 1} vanish, i.e.,
\begin{equation}
\label{eq: a15 1}
\begin{cases}
a_1-\beta a_2-\lambda a_5=0,\\
-\beta +\lambda a_4-\alpha a_5=0,\\
a_5+\lambda a_2-\lambda a_3-\alpha =0.
\end{cases}
\end{equation}
Substituting \eqref{A terms} and \eqref{B terms} into \eqref{eq: estimate Q 1}, and setting $\eta:=C_A+C_B$, we get
\begin{align}
&\frac{d}{dt}\int_{\R^{3d}} \Qq(q,p,z)\rho_t(dqdpdz)\nonumber
\\&\leq 2a_3d+ \left(\frac{a_4}{\beta}+\frac{a_2^2}{\lambda}+\frac{1}{\alpha a_3}\right)|A(0)|^2\nonumber
\\&\qquad+\int_{\R^{3d}}\Big[-(\beta a_4-(2a_4+a_2+1)\eta))\,|q|^2-(\lambda-2a_4-a_2\eta)\,|p|^2\nonumber
\\&\hspace*{3cm}-(\alpha a_3-2\lambda-\eta)|z|^2\Big]\rho_t(dqdpdz).\label{Q estimate}
\end{align}
Similarly as in the proof of Theorem \ref{theo: longtime}, we can prove the existence of a positive constant $\eta_0$, depending only on the parameters of the equation, such that for all $0\leq C_A+C_B<\eta_0$, there exist $a_1,\ldots,a_5$ that satisfy \eqref{eq: a15 1} and 
\begin{equation}
\begin{cases}
\beta a_4-(2a_4+a_2+1)\eta>0,\\
\lambda-2a_4-a_2\eta>0,\\
\alpha a_3-2\lambda-\eta,
\end{cases}
\end{equation}
and that $\Qq(q,p,z)$ is a positive definite quadratic form on $\R^{3d}$. It follows from \eqref{Q estimate} that
\begin{align*}
\frac{d}{dt}\int \Qq(q,p,z)\rho_t(dqdpdz)&\leq C_1-C_2\int_{\R^{3d}}(|q|^2+|p|^2+|z|^2)\rho_t(dqdpdz)
\\&\leq C_1-C_3\int_{\R^{3d}}\Qq(q,p,z)\rho_t(dqdpdz),
\end{align*}
for some positive constants $C_1, C_2$ and $C_3$. Applying the Gronwall's lemma, we obtain that
\begin{equation*}
\sup_{t\geq 0}\int_{\R^{3d}}\Qq(q,p,z)\rho_t(dqdpdz)<\infty,
\end{equation*}
provided that initially $\int_{\R^{3d}}\Qq(q,p,z)\rho_0(dqdpdz)<\infty$. This is equivalent to the statement that  
\begin{equation*}
\sup_{t\geq 0}\int_{\R^{3d}}(|q|^2+|p|^2+|z|^2)\rho_t(dqdpdz)<\infty,
\end{equation*}
if initially $\rho_0$ has finite second moment. This completes the proof of the Lemma.
\end{proof}
We are now in the position to prove Theorem \ref{theo: progation of chaos}.
\begin{proof}[Proof of Theorem \ref{theo: progation of chaos}]
We recall that for each $1\leq i\leq N$, the law $\mu_t$ of $(\Q^{i}_t,\P^{i}_t,\Z^{i}_t)$ is the solution to Eq. \eqref{eq: GLE} at time $t$ with the initial datum $\rho_0$ and that the two processes $(\Q^{i}_t,\P^{i}_t,\Z^{i}_t)_{t\geq 0}$ and $(Q^{i,N}_t,P^{i,N}_t,Z^{i,N}_t)_{t\geq 0}$ are driven by the same Brownian motions and initial datum. Define $(q^i_t,p^i_t,z^i_t):=(Q^{i,N}_t,P^{i,N}_t,Z^{i,N}_t)-(\Q^{i}_t,\P^{i}_t,\Z^{i}_t)$, then $(q^i_t,p^i_t,z^i_t)$ satisfies the following SDE
\begin{equation}
\label{eq: SDE difference}
\begin{cases}
dq^i_t=p^i_t\,dt,\\
dp^i_t=-\beta q^i_t\,dt- (A(Q^{i,N}_t)-A(\Q^{i}_t))\,dt-\frac{1}{N}\sum_{j=1}^N(B(Q^{i,N}_t-Q^{j,N}_t)-B\ast\mu_t(\Q^i_t))\,dt+\lambda z^i_t\,dt,\\
dz^i_t=-\lambda p^i_t\,dt-\lambda \alpha z^i_t\,dt,\\
(q^i_0,p^i_0,z^i_0)=(0,0,0).
\end{cases}
\end{equation}
The underlying idea of remaining of the proof will be similar as that of Theorem \ref{theo: longtime} and Lemma \ref{lem: bounded of second moments}. We will find a positive definite quadratic form $\D$ on $\R^{3d}$ such that the SDE \eqref{eq: SDE difference} is dissipative with respect to $\D$. Let $a_1,\ldots,a_5$ be positive constants to be chosen later on. We define
\begin{equation}
\D(q,p,z):=a_1|q|^2+a_2|p|^2+a_3|z|^2+2a_4 q\cdot p+2a_5 q\cdot z+2 p\cdot z.
\end{equation}
Let $(q^i_t,p^i_t,z^i_t)$ be a solution to the SDE \eqref{eq: SDE difference}. We have
\begin{align}
\label{eq: D 1}
&\frac{d}{dt}\D(q^i_t,p^i_t,z^i_t)\nonumber
\\&\quad=\frac{d}{dt}(a_1|q^i_t|^2+a_2|p^i_t|^2+a_3|z^i_t|^2+2a_4 q^i_t\cdot p^i_t+2a_5 q^i_t\cdot z^i_t+2 p^i_t\cdot z^i_t)\nonumber
\\&\quad= 2 a_1 q^i_t\cdot p^i_t+ 2 a_4 |p^i_t|^2+2 a_5 p^i_t\cdot z^i_t\nonumber
\\&\quad\quad+(2a_3 z^i_t+ 2a_5 q^i_t+2 p^i_t)\cdot\left(-\lambda p^i_t-\alpha z^i_t\right)\nonumber
\\&\quad\quad+ (2 a_2 p^i_t+2 a_4 q^i_t+ 2z^i_t)\cdot \Big(-\beta q^i_t- (A(Q^{i,N}_t)-A(\Q^{i}_t))-\frac{1}{N}\sum_{j=1}^N(B(Q^{i,N}_t-Q^{j,N}_t)\nonumber
\\&\hspace*{6cm}-B\ast\mu_t(\Q^i_t))+\lambda z^i_t\Big)\nonumber
\\&\quad=(2a_1-2\lambda a_5-2 \beta a_2)\,q^i_t\cdot p^i_t+
(2a_5-2\lambda a_3-2\alpha+2\lambda a_2)\,p^i_t\cdot z^i_t+(2\lambda a_4-2\alpha a_5-2\beta)\,q^i_t\cdot z^i_t\nonumber
\\&\quad\quad -2\beta a_4\,|q^i_t|^2-(2\lambda-2a_4)\,|p^i_t|^2-(2\alpha a_3-2\lambda)\,|z^i_t|^2\nonumber
\\&\quad\quad -(2 a_2 p^i_t+2 a_4 q^i_t+ 2z^i_t)\cdot(A(Q^{i,N}_t)-A(\Q^{i}_t))\nonumber
\\&\quad\quad -\frac{1}{N} \sum_{j=1}^N(2 a_2 p^i_t+2 a_4 q^i_t+ 2z^i_t)\cdot(B(Q^{i,N}_t-Q^{j,N}_t)-B\ast\mu_t(\Q^i_t)).
\end{align}
Similarly as in \eqref{eq: estimate 2} we have
\begin{align}
\label{eq: D 2}
 -(2 a_2 p^i_t+2a_4 q^i_t+2z^i_t)(A(Q^{i,N}_t)-A(\Q^i_t))&\leq 2 a_2 C_A|q^i_t||p^i_t|+2a_4 C_A|q^i_t|^2+2 C_A|q^i_t||z^i_t|\nonumber
 \\&\leq a_2 C_A (|q^i_t|^2+|p^i_t|^2)+2a_4 C_A|q^i_t|^2+C_A(|q^i_t|^2+|z^i_t|^2)\nonumber
 \\&= (a_2+2a_4+1)C_A |q^i_t|^2+ a_2 C_A |p^i_t|^2+C_A |z^i_t|^2.
\end{align}
Substituting \eqref{eq: D 2} into \eqref{eq: D 1}, we obtain
\begin{align}
\label{eq: D 3}
&\frac{d}{dt}\D(q^i_t,p^i_t,z^i_t)\nonumber
\\&\quad\leq (2a_1-2\lambda a_5-2 \beta a_2)\,q^i_t\cdot p^i_t+
(2a_5-2\lambda a_3-2\alpha+2\lambda a_2)\,p^i_t\cdot z^i_t+(2\lambda a_4-2 \alpha a_5-2\beta)\,q^i_t\cdot z^i_t\nonumber
\\&\quad\quad -(2\beta a_4-(a_2+2a_4+1)C_A)\,|q^i_t|^2-(2\lambda-2a_4-2a_2 C_A)\,|p^i_t|^2-(2\alpha a_3-2\lambda-C_A)\,|z^i_t|^2\nonumber
\\&\quad\quad -\frac{1}{N} \sum_{j=1}^N(2 a_2 p^i_t+2 a_4 q^i_t+ 2z^i_t)\cdot(B(Q^{i,N}_t-Q^{j,N}_t)-B\ast\mu_t(\Q^i_t)).
\end{align}
We first choose $a_1,\ldots, a_5$ such that the cross terms in the right-hand side of \eqref{eq: D 3} vanish, i.e.,
\begin{equation}
\label{eq: a1-a5 D}
\begin{cases}
2a_1-2\lambda a_5-2a_2\beta=0,
\\2\lambda a_2-2\lambda a_3+2a_5-2\alpha=0,
\\2\lambda a_4-2\alpha a_5-2\beta=0.
\end{cases}
\end{equation}
With this choice, \eqref{eq: D 3} reads
\begin{align}
\label{eq: D 5}
&\frac{d}{dt}\D(q^i_t,p^i_t,z^i_t)\nonumber
\\&\quad\leq -(2\beta a_4-(a_2+2a_4+1)C_A)\,|q^i_t|^2-(2\lambda-2a_4-2a_2 C_A)\,|p^i_t|^2-(2\alpha a_3-2\lambda-C_A)\,|z^i_t|^2\nonumber
\\&\quad\quad -\frac{1}{N} \sum_{j=1}^N(2 a_2 p^i_t+2 a_4 q^i_t+ 2z^i_t)\cdot(B(Q^{i,N}_t-Q^{j,N}_t)-B\ast\mu_t(\Q^i_t)).
\end{align}
Note that this estimate holds for every $i=1,\ldots, N$. By averaging \eqref{eq: D 5} over $i$ we have
\begin{align}
\label{eq: D 6}
&\frac{d}{dt}\D(q^1_t,p^1_t,z^1_t)\nonumber
\\&\quad\leq -(2\beta a_4-(a_2+2a_4+1)C_A)\,|q^1_t|^2-(2\lambda-2a_4-2a_2 C_A)\,|p^1_t|^2-(2\alpha a_3-2\lambda-C_A)\,|z^1_t|^2\nonumber
\\&\quad\quad -\frac{2}{N^2} \sum_{i,j=1}^N(a_2 p^i_t+a_4 q^i_t+ z^i_t)\cdot(B(Q^{i,N}_t-Q^{j,N}_t)-B\ast\mu_t(\Q^i_t)).
\end{align}
Next we estimate the last term in \eqref{eq: D 6}. Using the following identity
\begin{equation*}
B(Q^{i,N}_t-Q^{j,N}_t)-B\ast\mu_t(\Q^i_t)=B(Q^{i,N}_t-Q^{j,N}_t)-B(\Q^{i}_t-\Q^{j}_t)+B(\Q^{i}_t-\Q^{j}_t)-B\ast\mu_t(\Q^i_t),
\end{equation*}
we now decompose the last term in \eqref{eq: D 6} into six terms  and estimate them as follows.
\begin{enumerate}[1)]
\item By symmetry and the Lipschitz property of $B$, we have
\begin{align*}
\label{eq: term 1}
&-\sum_{i,j=1}^N\E [q^i_t\cdot (B(Q^{i,N}_t-Q^{j,N}_t)-B(\Q^{i}_t-\Q^{j}_t))]\nonumber
\\&\quad=-\sum_{i,j=1}^N\E [(Q^{i,N}_t-\Q^{i}_t)\cdot (B(Q^{i,N}_t-Q^{j,N}_t)-B(\Q^{i}_t-\Q^{j}_t))]\nonumber
\\&\quad=-\frac{1}{2}\sum_{i,j=1}^N\E [((Q^{i,N}_t-Q^{j,N}_t)-(\Q^{i}_t-\Q^{j}_t))\cdot (B(Q^{i,N}_t-Q^{j,N}_t)-B(\Q^{i}_t-\Q^{j}_t))]\nonumber
\\&\quad\leq \frac{C_B}{2}\sum_{i,j=1}^N\E \left|(Q^{i,N}_t-Q^{j,N}_t)-(\Q^{i}_t-\Q^{j}_t)\right|^2\nonumber
\\&\quad =\frac{C_B}{2}\sum_{i,j=1}^N\E \left|(Q^{i,N}_t-\Q^{i}_t)-(Q^{j,N}_t-\Q^{j}_t)\right|^2\nonumber
\\&\quad =C_B\sum_{i,j=1}^N\E |q^i_t|^2-C_B\E\,\Big|\sum_{i=1}^N q^i_t\Big|^2\nonumber
\\&\quad\leq C_B\sum_{i,j=1}^N\E |q^i_t|^2=C_B\,N^2\,\E |q^1_t|^2.
\end{align*}
\item By assumption on $B$ and Young's inequality, we have
\begin{align*}
&-\sum_{i,j=1}^N\E [p^i_t\cdot (B(Q^{i,N}_t-Q^{j,N}_t)-B(\Q^{i}_t-\Q^{j}_t))]\nonumber
\\&\quad=-\frac{1}{2}\sum_{i,j=1}^N\E [(p^{i}_t-p^{j}_t)\cdot (B(Q^{i,N}_t-Q^{j,N}_t)-B(\Q^{i}_t-\Q^{j}_t))]\nonumber
\\&\quad\leq \frac{C_B}{2}\sum_{i,j=1}^N\E \Big[|p^{i}_t-p^{j}_t|\big|(Q^{i,N}_t-Q^{j,N}_t)-(\Q^{i}_t-\Q^{j}_t)\big|\Big]\nonumber
\\&\quad\leq \frac{C_B}{2}\sum_{i,j=1}^N\E \Big[\frac{1}{2}|p^{i}_t-p^{j}_t|^2+\frac{1}{2}\big|(Q^{i,N}_t-Q^{j,N}_t)-(\Q^{i}_t-\Q^{j}_t)\big|^2\Big]\nonumber
\\&\quad=\frac{C_B}{2}\left(N^2 \E |p^1_t|^2-\big|\sum_{j=1}^N\E p^j_t\big|^2+N^2 \E|q^1_t|^2-\big|\sum_{j=1}^N\E q^j_t\big|^2\right)
\\&\quad\leq \frac{C_B}{2}N^2\,\E [|q^1_t|^2+|p^1_t|^2].
\end{align*}
\item Similarly we obtain
\begin{equation*}
\label{eq: term 3}
-\sum_{i,j=1}^N\E [z^i_t\cdot (B(Q^{i,N}_t-Q^{j,N}_t)-B(\Q^{i}_t-\Q^{j}_t))]\leq \frac{C_B}{2}\,N^2\,\E [|q^1_t|^2+|z^1_t|^2].
\end{equation*}
\item We continue with the term
\begin{align}
\label{eq: term 4}
&-2a_4\E\,\Big[q^i_t\cdot\sum_{j=1}^N \big(B(\Q^{i}_t-\Q^{j}_t)-B\ast\mu_t(\Q^i_t)\big)\Big]\nonumber
\\&\quad\leq N\beta a_4\,\E\,|q^i_t|^2+\frac{a_4}{N\beta }\,\E\Big|\sum_{j=1}^N\big(B(\Q^{i}_t-\Q^{j}_t)-B\ast\mu_t(\Q^i_t)\big)\Big|^2\nonumber
\\&\quad = N\beta a_4\,\E\,|q^i_t|^2+\frac{a_4}{N\beta}\,\sum_{j=1}^N\E\Big|B(\Q^{i}_t-\Q^{j}_t)-B\ast\mu_t(\Q^i_t)\Big|^2\nonumber
\\&\qquad\quad+\frac{a_4}{N\beta}\sum_{j\neq k}\E\Big[(B(\Q^{i}_t-\Q^{j}_t)-B\ast \mu_t(\Q^i_t))\cdot(B(\Q^{i}_t-\Q^{k}_t)-B\ast\mu_t(\Q^i_t)) \Big].
\end{align}
We need to analyse further the last two terms in \eqref{eq: term 4}. Because $B$ is odd, we have $B(0)=0$. Hence for any $y \in \R^{d}$ it holds that $|B(y)|=|B(y)-B(0)|\leq C_B |y|$. It implies that
\begin{align}
\label{eq: term 4.1}
&\E\Big|B(\Q^{i}_t-\Q^{j}_t)-B\ast\mu_t(\Q^i_t)\Big|^2\nonumber
\\\quad &\leq 2\left[\E\,|B(\Q^{i}_t-\Q^{j}_t)|^2+\E|B\ast\mu_t(\Q^i_t)|^2\right]\nonumber
\\&\leq 2 C_B^2\left[\E\,|\Q^{i}_t-\Q^{j}_t|^2+\int_{\R^{6d}}|q-q'|^2\mu_t(q,p,z)\mu_t(q',p',z')\, dqdpdzdq'dp'dz'\right]\nonumber
\\\quad & \leq 8 C_B^2\int_{\R^{3d}} |q|^2\mu_t(q,p,z)\,dqdpdz\nonumber
\\\quad & \leq M,
\end{align}
where in the last inequality above we have used Lemma \ref{lem: bounded of second moments}  to obtain \\$M=16 C_B^2 \left( C'\,W_2(\rho_0,\rho_\infty)^2+W_2(\delta_0,\rho_\infty)^2\right)$. Certainly $M$ depends only on the initial second moment and the coefficients of the equation and but not on $t$ or $N$.

The last term in \eqref{eq: term 4} vanishes since for all $j\neq k$, we have
\begin{align}
\label{eq: term 4.2}
&\E\Big[(B(\Q^{i}_t-\Q^{j}_t)-B\ast\mu_t(\Q^i_t))\cdot(B(\Q^{i}_t-\Q^{k}_t)-B\ast\mu_t(\Q^i_t)) \Big]\nonumber
\\\qquad& =\E_{\Q^i_t}\Big[\big(\E_{\Q^j_t}[B(\Q^{i}_t-\Q^{j}_t)-B\ast\mu_t(\Q^i_t)]\big)\cdot\big(\E_{\Q^k_t}[B(\Q^{i}_t-\Q^{k}_t)-B\ast\mu_t(\Q^i_t)]\big) \Big]\nonumber
\\\qquad& = \E_{\Q^i_t}[0]=0.
\end{align}
The equality above holds true because $\Q^j_t$ and $\Q^k_t$ are independent and have the same law, which is the first marginal of $\mu_t$.

Substituting \eqref{eq: term 4.1} and \eqref{eq: term 4.2} into \eqref{eq: term 4}, we get
\begin{equation*}
-2a_4\E\,\Big[q^i_t\cdot\sum_{j=1}^N \big(B(\Q^{i}_t-\Q^{j}_t)-B\ast\mu_t(\Q^i_t)\big)\Big]\leq N\beta a_4\E|q^i_t|^2+\frac{a_4}{\beta}\,M,
\end{equation*}
from which, by summing over $i$, we obtain
\begin{align}
\label{eq: term 4 final}
-2a_4\sum_{i,j=1}^N\E\,\Big[q^i_t\cdot\big(B(\Q^{i}_t-\Q^{j}_t)-B\ast\mu_t(\Q^i_t)\big)\Big]&\leq N^2\beta a_4\E|q^i_t|^2+\frac{a_4}{\beta}\,M N\nonumber
\\&=N^2\beta a_4\,\E|q^1_t|^2+\frac{a_4}{\beta}\,M N.
\end{align}
\item Similarly we obtain the following estimate
\begin{equation*}
\label{eq: term 5}
-2a_2\sum_{i,j=1}^N\E\,\Big[p^i_t\cdot\big(B(\Q^{i}_t-\Q^{j}_t)-B\ast\mu_t(\Q^i_t)\big)\Big]\leq N^2\lambda\,\E|p^1_t|^2+\frac{a_2^2}{\lambda}\, M N.
\end{equation*}
\item Finally, for the last term we also get
\begin{equation*}
\label{eq: term 6}
-2\sum_{i,j=1}^N\E\,\Big[z^i_t\cdot\big(B(\Q^{i}_t-\Q^{j}_t)-B\ast\mu_t(\Q^i_t)\big)\Big]\leq N^2\alpha a_3\,\E|p^1_t|^2+\frac{1}{\alpha a_3} M N.
\end{equation*}
\end{enumerate}
Set $\eta:=C_A+C_B$. Bringing all terms together, it follows from \eqref{eq: D 6} that
\begin{align}
\label{eq: D 8}
\frac{d}{dt}\D(q^1_t,p^1_t,z^1_t)&\leq -(\beta a_4-(a_2+2a_4+1)\eta)\,|q^1_t|^2-(\lambda-2a_4-2a_2 \eta)\,|p^1_t|^2\nonumber
\\&\qquad -(\alpha a_3-2\lambda-\eta)\,|z^1_t|^2+\Big(\frac{a_4}{\beta}+\frac{a_2^2}{\lambda}+\frac{1}{\alpha a_3}\Big)\frac{M}{N}.
\end{align}
Note the difference between the right hand side of the above inequality and that of \eqref{eq: estimate 6} in Section \ref{sec: Long time}. The difference is due to the extra terms involving $B(\Q^i_t-\Q^j_t)-B\ast\mu_t(\Q^i_t)$ as shown in the above computations. We now choose $a_4=\frac{\lambda}{4}$. From \eqref{eq: a1-a5 D} we can express $a_1, a_2$ and $a_5$ in terms of $a_3$ and choose $a_3$ such that
\begin{equation}
\label{eq: a1-a5 D2}
\begin{cases}
a_5=\frac{\lambda a_4-\beta}{\alpha}=\frac{\lambda^2/4-\beta}{\alpha},\\
a_2=a_3+\frac{\alpha}{\lambda}-\frac{a_5}{\lambda},\\	
a_1=\lambda a_5+\beta a_2=\lambda a_5+\beta \Big(a_3+\frac{\alpha}{\lambda}-\frac{a_5}{\lambda}\Big),\\
\beta a_4-(a_2+2a_4+1)\,\eta=\frac{\lambda}{4}\beta -(a_3+\frac{\alpha}{\lambda}-\frac{a_5}{\lambda}+\lambda+1)\,\eta>0,\\
\lambda-2a_4-a_2\eta=\frac{\lambda}{2}-\Big(a_3+\frac{\alpha}{\lambda}-\frac{a_5}{\lambda}\Big)\eta>0,\\
\alpha a_3-2\lambda-\eta>0,
\end{cases}
\end{equation}
and that $\D(q,p,z)$ is a positive definite quadratic form on $\R^{3d}$, where
\begin{equation*}
\D(q,p,z)= \Big(a_5(\lambda-\frac{\beta}{\lambda})+\beta(a_3+\frac{\alpha}{\lambda})\Big)\,|q|^2+ \Big(a_3+\frac{\alpha}{\lambda}-\frac{a_5}{\lambda}\Big)\, |p|^2+ a_3 |z|^2+\frac{\lambda}{2}\, q\cdot p-2a_5\, q\cdot z + 2 p\cdot z.
\end{equation*}
Similarly as in the proof of Theorem \ref{pro: quadratic} and from \eqref{eq: D 8}, we obtain the existence of a positive constant $\eta_0$ depending only on $\alpha,\beta,\lambda, C_A$ and $C_B$ such that for all $0<C_A+C_B$, there exist $a_1,\ldots,a_5$ such that $\D(q,p,z)$ is a positive definite quadratic form on $\R^{3d}$ and such that
\begin{equation}
\label{eq: estimate D}
\frac{d}{dt}\E\D(q^1_t,p^1_t,z^1_t)\leq -C_1 \E[|q^1_t|^2+|p^1_t|^2+|z^1_t|^2]+\frac{C_2}{N},
\end{equation}
for all $t\geq 0$ and for positive constants $C_1$ and $C_2$, which depend only on the parameters of the equation and the initial second moment, but not on $N$. Since $\D$ is a positive quadratic on $\R^{3d}$, the right-hand side of \eqref{eq: estimate D} in turn is bounded by $-C_3 \E\D(q^1_t,p^1_t,z^1_t)+\frac{C_2}{N}$, for some positive constant $C_3$, so that
\begin{equation}
\frac{d}{dt}\E\D(q^1_t,p^1_t,z^1_t)\leq -C_3 \E\D(q^1_t,p^1_t,z^1_t)+\frac{C_2}{N}.
\end{equation}
From this inequality, we deduce that
\begin{equation*}
\E\D(q^1_t,p^1_t,z^1_t)\leq \frac{C_4}{N}, \quad\text{for all }~ t\geq 0,
\end{equation*} 
for some positive constant $C_4$. Using the fact that $\D$ is a positive definite quadratic on $\R^{3d}$ again, we obtain the following bound 
\begin{equation*}
\E[|Q^{1,N}_t-\Q^i_t|^2+|P^{1,N}_t-\P^i_t|^2+|Z^{1,N}_t-\Z^i_t|^2]=\E[|q^1_t|^2+|p^1_t|^2+|z^1_t|^2]\leq \frac{C}{N}, 
\end{equation*} 
for all $t\geq 0$ and for some positive constant $C$ depending only on the parameters of the equation and the initial second moment, but not on $N$. This completes the proof of \eqref{eq: propagation of chaos}. 

The assertion that the particle system \eqref{eq: many-SDE} satisfies the property of propagation of chaos is a direct consequence of \eqref{eq: propagation of chaos}. In deed, let $(i_1,\ldots,i_k)$ be an arbitrary $k$-uple of $[1,N]$ and $\varphi$ be a $1$-Lipschitz function on $(\R^{3d})^k$. Then
\begin{align*}
&\left|\int_{(\R^{3d})^k} \varphi\, \rho^{(i_1,\ldots i_k)}_t-\int_{(\R^{3d})^k} \varphi\,(\mu_t)^{\otimes k}\right|=\left|\E\varphi(X_t^{i_1,N},\ldots,X_t^{i_k,N})-\E\varphi(\X_t^{i_1,N},\ldots,\X_t^{i_k,N})\right|
\\& \qquad \leq \left|\E\varphi(X_t^{i_1,N},\ldots,X_t^{i_k,N})-\E\varphi(\X_t^{i_1,N},X_t^{i_2,N},\ldots,X_t^{i_k,N})\right|+\ldots
\\ & \qquad\qquad +\left|\E\varphi(\X_t^{i_1,N},\ldots,\X_t^{i_{k-1},N},X_t^{i_k,N})\right|-\E\left|\varphi(\X_t^{i_1,N},\ldots,\X_t^{i_k,N})\right|
\\&\qquad \leq \sum_{i=1}^k \E|X^{i,N}_t-\X^{i,N}_t|\leq \frac{C\,k}{\sqrt{N}},
\end{align*}
where we have used the Lipschitz property of $\varphi$ and \eqref{eq: propagation of chaos}. This estimate implies that $\rho^{i_1,\ldots,i_k}_t$ converges to $(\mu_t)^{\otimes k}$ in the $1$-Wasserstein distance, which is tested against $1$-Lipschitz functions. Since the $1$-Wasserstein distance is equivalent to narrow convergence plus convergence of the first moments, this concludes that $\rho^{i_1,\ldots,i_k}_t$ converges to $(\mu_t)^{\otimes k}$ narrowly. 

Finally, the estimate \eqref{eq: Wasserstein bound} is  followed straightforwardly thanks to \eqref{eq: propagation of chaos} and definition of the Wasserstein distance \eqref{eq: def W_2}. We complete the proof of Theorem \ref{theo: progation of chaos}.
\end{proof}





\end{document}